\newcommand{\alphabet}{\mathfrak{n}}
\newcommand{\shiftspace}{\alphabet^{\mathbb{Z}}}
\newcommand{\orbit}{\text{Orb}}
\newcommand{\shiftorbit}{\overline{\orbit}}
\newcommand{\blanksymbol}{\square}
\newcommand{\lcmprod}{\text{lcm}}
\newcommand{\natnump}{\mathbb{N}^+}
\newcommand{\natnum}{\mathbb{N}}
\newcommand{\spaceoftoeplitzsubshift}{\mathcal{T}_{\alphabet}}
\newcommand{\spaceoftoeplitzsubshiftsep}{\spaceoftoeplitzsubshift^{*}}
\newcommand{\spaceoftoeplitzsubshiftgr}{\spaceoftoeplitzsubshift^{**}}
\newcommand{\seqshift}{\sigma}
\newcommand{\odometer}{\mathtt{Odo}}
\newcommand{\odometeraddition}{\lambda}
\newcommand{\fin}{\text{fin}}
\newcommand{\Fin}{\text{Fin}}
\newtheorem{theorem}{Theorem}
\newtheorem{lemma}[theorem]{Lemma}
\newtheorem{corollary}[theorem]{Corollary}
\newtheorem{proposition}[theorem]{Proposition}
\newtheorem{question}{Question}
\title{The complexity of the topological conjugacy problem for Toeplitz subshifts}
\author{Burak Kaya}
\date{\today}
\subjclass[2010]{03E15 (primary), 37B10 (secondary)}
\address{
Department of Mathematics, Rutgers University\\
110 Frelinghuysen Road, Hill Center,
Piscataway, NJ 08854, USA\\
}
\email{bkaya@math.rutgers.edu}
\begin{document}

\begin{abstract} In this paper, we analyze the Borel complexity of the topological conjugacy relation on Toeplitz subshifts. More specifically, we prove that topological conjugacy of Toeplitz subshifts with separated holes is hyperfinite. Indeed, we show that the topological conjugacy relation is hyperfinite on a larger class of Toeplitz subshifts which we call Toeplitz subshifts with growing blocks. This result provides a partial answer to a question asked by Sabok and Tsankov.
\end{abstract}
\maketitle

\section{Introduction}
Descriptive set theory provides a framework to analyze the relative complexity of classification problems from diverse areas of mathematics. Under appropriate coding and identification, various collections of mathematical structures can be naturally regarded as \textit{Polish spaces}, i.e. completely metrizable separable topological spaces. It turns out that many classification problems on these structures can be considered as definable equivalence relations on the corresponding Polish spaces. One can use the notion of \textit{Borel reducibility}, introduced by Friedman and Stanley \cite{FriedmanStanley89} to measure the relative complexity of these definable equivalence relations. For a general development of this framework, we refer the reader to \cite{Gao09}.

Symbolic dynamics has been one of the subjects of this study. In particular, the topological conjugacy relations on various subclasses of subshifts have been extensively analyzed. For example, Clemens \cite{Clemens09} proved that the topological conjugacy relation on subshifts over a finite alphabet is a universal countable Borel equivalence relation. Gao, Jackson, and Seward \cite{GaoJacksonSeward15} analyzed topological conjugacy of generalized $G$-subshifts and showed that topological conjugacy of $G$-subshifts is Borel bireducible with the Borel equivalence relation $E_0$ when $G$ is locally finite; and that topological conjugacy of $G$-subshifts is a universal countable Borel equivalence relation when $G$ is not locally finite. They also proved that the topological conjugacy relation on minimal subshifts over a finite alphabet is not smooth and posed the question of determining the Borel complexity of this relation.

Since then, the project of analyzing the Borel complexity of the topological conjugacy relation for restricted classes of minimal subshifts has been pursued in different directions. For example, Gao and Hill \cite{GaoHill13} have shown that topological conjugacy of minimal rank-1 systems is Borel bireducible with $E_0$. Thomas \cite{Thomas13} proved that the topological conjugacy relation is not smooth for the class of \textit{Toeplitz subshifts}, i.e. minimal subshifts that contain bi-infinite sequences in which every subblock appears periodically.

Subsequent to Thomas' result on Toeplitz subshifts, Sabok and Tsankov \cite{SabokTsankov15} analyzed topological conjugacy of generalized Toeplitz $G$-subshifts for residually finite groups $G$. They proved that topological conjugacy of generalized Toeplitz $G$-subshifts is not hyperfinite if $G$ is residually finite and non-amenable; and that topological conjugacy of Toeplitz subshifts with separated holes is 1-amenable. It is well-known that hyperfiniteness implies 1-amenability \cite[Proposition 2.13]{JacksonKechrisLouveau02} and that 1-amenable relations are hyperfinite $\mu$-almost everywhere for every Borel probability measure $\mu$ on the relevant standard Borel space \cite[Corollary 10.2]{KechrisMiller04}. On the other hand, it is still open whether 1-amenability implies hyperfiniteness.

Sabok and Tsankov asked whether or not the topological conjugacy relation on Toeplitz subshifts is hyperfinite. We will provide a partial affirmative answer to this question and prove the following theorem, which is a strengthening of the result of Sabok and Tsankov.

\begin{theorem}\label{theorem-mainresulttoeplitz} The topological conjugacy relation on Toeplitz subshifts with separated holes over a finite alphabet is hyperfinite.
\end{theorem}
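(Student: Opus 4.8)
The plan is to replace Toeplitz subshifts by explicit combinatorial codes, to characterize topological conjugacy in terms of these codes, and to recognize the resulting equivalence relation on codes as hyperfinite.

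First I would fix the coding machinery. Every Toeplitz sequence comes from a \emph{periodic structure}: periods $1 = p_0 \mid p_1 \mid p_2 \mid \cdots$ and words $w_n \in (\alphabet \cup \{\blanksymbol\})^{p_n}$ such that the periodization of $w_{n+1}$ fills in some of the $\blanksymbol$'s in the periodization of $w_n$, with the density of $\blanksymbol$'s tending to $0$; the Toeplitz point is $\lim_n w_n^{\infty}$ and the subshift is its orbit closure. The ``separated holes'' hypothesis constrains the $\blanksymbol$-positions of each $w_n$ to be pairwise far apart, with separation growing in $n$, so that at deep levels each hole sits isolated inside a long run of filled coordinates. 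I would set up the standard Borel space $\spaceoftoeplitzsubshiftsep$ of such codes, check that the map sending a code to its subshift is Borel and that every Toeplitz subshift with separated holes is so coded, and, using a Borel choice of canonical code, reduce Theorem~\ref{theorem-mainresulttoeplitz} to the hyperfiniteness of the conjugacy relation pulled back to $\spaceoftoeplitzsubshiftsep$. Two remarks streamline this: conjugacy of subshifts over a finite alphabet is a countable Borel equivalence relation, since conjugacies are sliding block codes; and a conjugacy induces an isomorphism of maximal equicontinuous factors, which for Toeplitz subshifts are odometers $\odometer$ classified by their supernatural numbers. As equality of supernatural numbers is a smooth equivalence relation containing conjugacy, it suffices to treat, uniformly, the Toeplitz subshifts with a fixed odometer, so that all codes may be taken over one fixed scale $(p_n)$.

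The technical core is a combinatorial description of conjugacy. If $\phi$ is a conjugacy between the subshifts coded by $(w_n)$ and $(v_n)$, fix a radius $r$ bounding both $\phi$ and $\phi^{-1}$ as sliding block codes. The separated-holes hypothesis forces $\phi$ to be \emph{local at all sufficiently deep levels}: once the hole separation in $w_n$ exceeds $2r+1$, the image of $w_n^{\infty}$ is produced by independently rewriting a radius-$r$ window around each hole, the holes of the image lie within $r$ of holes of $(w_n)$, and the level-$n$ word of the image is obtained from $w_n$ by one of only finitely many radius-$r$ rewriting rules. I would package this as a lemma: the two subshifts are conjugate iff there exist $r \in \natnum$, a telescoping and alignment of the two period structures, and mutually inverse radius-$r$ rewriting rules, such that from some level on the words of one code are the images under these rules of the words of the other. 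This is precisely where the hypothesis is essential — for a general Toeplitz subshift a conjugacy could split, merge or permute whole clusters of holes, and its level-wise action would no longer be a bounded-radius local rule — and I expect this lemma, i.e.\ the uniform local control of conjugacies against the hole structure, to be the main obstacle.

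Granting the lemma, hyperfiniteness follows by a normalization argument. Because a radius-$r$ rule only relabels the finitely many ``types'' of neighbourhoods that can occur around a hole, each code can be put into a canonical form modulo local recoding, level by level, and two codes are then conjugate iff their canonical forms eventually agree after telescoping and an integer shift of the level index. Encoding the canonical forms as a point of a Polish space — with each level ranging over a finite set, the scale being fixed — this last relation Borel reduces to the eventual-equality relation $E_0$ and is therefore hyperfinite; equivalently, one can write the pulled-back conjugacy relation as an increasing union of finite Borel equivalence relations by simultaneously bounding $r$, the telescoping depth and the canonical-form complexity. Transferring the conclusion back along the canonical coding, and then removing the fixed-odometer assumption via the smoothness remark above, yields Theorem~\ref{theorem-mainresulttoeplitz}.
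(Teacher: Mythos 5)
There is a genuine gap, and it sits exactly where the paper has to work hardest: your ``normalization'' step. You claim that, granting the locality lemma, each code can be put into a canonical form level by level so that two subshifts are conjugate iff the canonical forms eventually agree, giving a direct reduction to $E_0$. But a conjugacy $\pi$ does not respect any chosen code or base point: it sends the distinguished Toeplitz point of $O$ to \emph{some} point of $O'$, displaced by an unknown (though bounded in terms of $|\pi|$) shift, and it matches the level-$i$ data of $O$ with one of several elements of the level-$i$ partition of $O'$, via one of many permutations of the $p_i$-block alphabet. So the natural level-$i$ invariant is not a single word but a finite set of objects considered up to a finite group action, and it is not justified that one can Borel-select a single representative per level in a way that is conjugacy-invariant \emph{and} eventually exact; this is essentially the same obstruction as the paper's open Question about Borel selection of Toeplitz points compatible with conjugacy. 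The paper's proof is built precisely to avoid this: it maps $O$ to the sequence of \emph{finite sets} $\chi(O)_i$ of recentered partition elements, compares them with the relations $\mathtt{D}_{r_i}^{\fin}$ (the $\fin$-jump of the finite $Sym(\alphabet^{r_i})$-orbit relations, which are only smooth, not countable-to-one onto a countable set), and therefore lands in $E_1$ rather than $E_0$; hyperfiniteness is then recovered only because the conjugacy relation is countable and countable hypersmooth relations are hyperfinite (Corollary \ref{corollary-maincorollary}, resting on Dougherty--Jackson--Kechris and the $E_1$ characterization of hypersmoothness). Your sketch silently assumes the stronger $E_0$-reduction without addressing the alignment/selection problem, and that assumption is not warranted.

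A secondary issue is the key lemma itself, which you correctly flag as the main obstacle but state in a form that is not quite right. Under separated holes, positions of $\beta=\pi(\alpha)$ far from the holes of the $p$-skeleton of $\alpha$ are indeed $p$-periodic (this is Proposition \ref{proposition-phole}), but whether a position \emph{within} distance $r$ of a hole of $\alpha$ is a hole of $\beta$ depends on how the deeper levels fill that hole, so the level-$n$ word of the image is not obtained from $w_n$ by a radius-$r$ rewriting rule applied to the skeleton alone; and conversely, level-wise rewritability of skeleton words by some rule does not by itself produce a conjugacy without a consistency argument across levels. The correct combinatorial criterion, which the paper extracts from Downarowicz--Kwiatkowski--Lacroix (Lemmas \ref{lemma-obscurelemma} and \ref{lemma-notobscurelemma}), is of a different shape: for a sufficiently large factor $p$ of the common scale, conjugacy holds iff some permutation of $\alphabet^p$ carries the $p$-blocks of a suitably aligned point of $O$ onto those of a point of $O'$. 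Reworking your argument around that criterion, and around finite sets of candidates rather than a single canonical code, essentially reproduces the paper's proof (which in fact runs on the larger class of Toeplitz subshifts with growing blocks and deduces the separated-holes case by restriction).
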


Indeed, we will prove that the topological conjugacy relation is hyperfinite on a larger class of Toeplitz subshifts which we shall call Toeplitz subshifts with growing blocks. Although the class of Toeplitz subshifts with growing blocks is strictly larger than the class of Toeplitz subshifts with separated holes, it does not contain all Toeplitz subshifts and hence the question of whether or not topological conjugacy of Toeplitz subshifts is hyperfinite remains open. Nevertheless, our result can be regarded as an important step towards proving the hyperfiniteness of the topological conjugacy relation on Toeplitz subshifts.

This paper is organized as follows. In Section 2, we will first recall some basic results from the theory of Borel equivalence relations which will be used throughout this paper. Then we will give an overview of Toeplitz sequences and Toeplitz subshifts following \cite{Williams84, Downarowicz05, DownarowiczKwiatkowskiLacroix95} and construct the standard Borel spaces of various subclasses of Toeplitz subshifts. In Section 3, we will prove two lemmas which are
slightly more general restatements of a criterion for Toeplitz subshifts to be topologically
conjugate originally due to Downarowicz, Kwiatkowski, and Lacroix \cite{DownarowiczKwiatkowskiLacroix95}. In Section 4, we will discuss some basic properties of an operation defined on the class of Borel equivalence relations, which is essential to the proof of the main result. In Section 5, we shall prove the main result of this paper. In Section 6, we will briefly describe how our technique may be generalized and discuss further possible research directions.

\section{Preliminaries}
\subsection{Background from the theory of Borel equivalence relations}
In this subsection, we shall discuss some basic notions and results from the theory of Borel equivalence relations.

Suppose that $(X,\mathcal{B})$ is a measurable space, i.e. $\mathcal{B}$ is a $\sigma$-algebra of subsets of $X$. Then $(X,\mathcal{B})$ is said to be a \textit{standard Borel space} if there exists a Polish topology $\tau$ on $X$ such that $\mathcal{B}$ is the Borel $\sigma$-algebra of $(X,\tau)$. It is well-known that if $A \subseteq X$ is a Borel subset of a standard Borel space $(X,\mathcal{B})$, then $(A,\mathcal{B}\upharpoonright A)$ is also a standard Borel space where
\[ \mathcal{B}\upharpoonright A=\{A \cap B: B \in \mathcal{B}\}\]
From now on, while denoting a standard Borel space $(X,\mathcal{B})$, we shall drop the collection of measurable sets and refer to $X$ as a standard Borel space if the standard Borel structure is understood from the context.

Let $X$ and $Y$ be standard Borel spaces. A map $f: X \rightarrow Y$ is called \textit{Borel} if $f^{-1}[B]$ is a Borel subset of $X$ for all Borel subsets $B \subseteq Y$. Equivalently, $f$ is Borel if and only if its graph is a Borel subset of $X \times Y$ where the product $X \times Y$ is endowed with the product $\sigma$-algebra. Two standard Borel spaces $X$ and $Y$ are said to be \textit{(Borel) isomorphic} if there exists a bijection $f: X \rightarrow Y$ such that both $f$ and $f^{-1}$ are Borel. It is a classical result of Kuratowski that any two uncountable standard Borel spaces are isomorphic \cite[Theorem 15.6]{Kechris95}.

An equivalence relation $E \subseteq X \times X$ on a standard Borel space $X$ is called a \textit{Borel equivalence relation} if it is a Borel subset of $X \times X$. Given two Borel equivalence relations $E$ and $F$ on standard Borel spaces $X$ and $Y$ respectively, a Borel map $f: X \rightarrow Y$ is called a \textit{Borel reduction} from $E$ to $F$ if for all $x,y \in X$,
\[ x\mathbin{E}y \Longleftrightarrow f(x)\mathbin{F}f(y)\]
We say that $E$ is \textit{Borel reducible} to $F$, written $E \leq_B F$, if there exists a Borel reduction from $E$ to $F$. Two Borel equivalence relations $E$ and $F$ are said to be \textit{Borel bireducible}, written $E \sim_B F$, if both $E \leq_B F$ and $F \leq_B E$. Finally, we say that $E$ is \textit{strictly less complex} than $F$, written $E <_B F$, if both $E \leq_B F$ and $F \nleq_B E$.

It turns out that there are no $\leq_B$-maximal elements in the $\leq_B$-hierarchy of Borel equivalence relations. In more detail, given a Borel equivalence relation $E$ on a standard Borel space $X$, consider the Borel equivalence relation $E^+$ on the space $X^{\mathbb{N}}$ defined by
\[ x E^+ y \Leftrightarrow \{[x_n]_E: n \in \mathbb{N}\}=\{[y_n]_E: n \in \mathbb{N}\}\]
It is well-known that if $E$ has more than one equivalence class, then $E <_B E^+$ \cite{FriedmanStanley89}. The operation $E \mapsto E^+$ is called the \textit{Friedman-Stanley jump}.

A Borel equivalence relation $E$ is called \textit{countable} (respectively, \textit{finite}) if every equivalence class of $E$ is countable (respectively, finite). Even though there are no $\leq_B$-maximal Borel equivalence relations, if we restrict our attention to countable Borel equivalence relations, then there exists a countable Borel equivalence relation $E_{\infty}$ which is \textit{universal} in the sense that for any countable Borel equivalence relation $F$ we have that $F \leq_B E_{\infty}$. For a detailed development of the theory of countable Borel equivalence relations, we refer the reader to \cite{JacksonKechrisLouveau02}.

A Borel equivalence relation $E$ is said to be \textit{smooth} if it is Borel reducible to the identity relation $\Delta_X$ on some (equivalently, every) uncountable standard Borel space $X$. For example, finite Borel equivalence relations are smooth \cite[Proposition 7.2.1]{Kanovei08}.

A Borel equivalence relation $E$ is said to be \textit{hyperfinite} (respectively, \textit{hypersmooth}) if there is an increasing sequence $F_0 \subseteq F_1 \subseteq F_2 \subseteq \dots$ of finite (respectively, smooth) Borel equivalence relations such that $E=\bigcup_{n \in \mathbb{N}} F_n$. It is easy to check that if $E \subseteq F$ are countable Borel equivalence relations on a standard Borel space $X$ and $F$ is hyperfinite, then $E$ is hyperfinite.

It turns out that hyperfinite Borel equivalence relations are exactly those countable Borel equivalence relations that are Borel reducible to the Borel equivalence relation $E_0$ on the Cantor space $2^\mathbb{N}$ defined by
\[ x E_0 y \Longleftrightarrow \exists m\ \forall n \geq m\ x(n)=y(n)\]
The following theorem of Dougherty, Jackson, and Kechris \cite{DoughertyJacksonKechris94} gives several equivalent characterizations of hyperfiniteness.
\begin{theorem}[Dougherty-Jackson-Kechris]\label{theorem-hyperfiniteequivalent} Let $E$ be a countable Borel equivalence relation on a standard Borel space $X$. Then the following are equivalent.
\begin{itemize}
\item $E$ is hyperfinite.
\item $E$ is hypersmooth.
\item $E \leq E_0$.
\item $E$ is the orbit equivalence relation of a Borel action of $\mathbb{Z}$ on $X$.
\end{itemize}
\end{theorem}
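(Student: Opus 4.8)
The plan is to establish the four conditions as equivalent by proving one essentially trivial implication together with several less trivial ones that close the cycle. The trivial implication is that $E$ hyperfinite implies $E$ hypersmooth: a finite Borel equivalence relation is smooth, so any increasing sequence $F_0 \subseteq F_1 \subseteq \cdots$ of finite Borel equivalence relations with $E = \bigcup_n F_n$ already witnesses hypersmoothness. For the remaining directions I would rely on two auxiliary facts. The first is that every \emph{countable} smooth Borel equivalence relation is hyperfinite: smoothness together with countability of the classes yields, via Lusin--Novikov uniformization, a Borel transversal $T$, and then, writing $E$ as the orbit equivalence relation of a Borel action of a countable group $\{\gamma_k\}_{k \in \mathbb{N}}$ by the Feldman--Moore theorem, one checks that the relation which within each class identifies the points $\gamma_0 \cdot t, \dots, \gamma_n \cdot t$ (with $t$ the transversal point of that class) and keeps all other points apart is finite, that these relations increase with $n$, and that their union is $E$. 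The second fact --- genuine input rather than bookkeeping --- is that an increasing union of countable hyperfinite Borel equivalence relations is again hyperfinite; I would invoke this preservation theorem of Dougherty, Jackson and Kechris as a black box.

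Granting these, the implication ``$E \leq_B E_0$ $\Rightarrow$ $E$ hyperfinite'' is immediate: writing $E_0 = \bigcup_n R_n$ with $R_n$ the finite equivalence relation of agreement past coordinate $n$, a Borel reduction $\varphi$ of $E$ to $E_0$ pulls these back to $E_n = (\varphi \times \varphi)^{-1}(R_n)$, an increasing sequence of Borel subrelations of $E$ with union $E$; each $E_n$ is countable (it is contained in $E$) and smooth (it reduces to the finite $R_n$ via $\varphi$), hence hyperfinite by the first auxiliary fact, and so $E$ is hyperfinite by the second. The same pattern gives ``$E$ hypersmooth $\Rightarrow$ $E$ hyperfinite'', the smooth approximants being automatically countable. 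Conversely, for ``$E$ hyperfinite $\Rightarrow$ $E \leq_B E_0$'' I would fix Borel selectors for the relations $F_n$ and use them to encode the ``coarsening history'' of a point $x$ --- which $F_n$-block it lies in and its position inside that block --- as a point $\Phi(x) \in 2^{\mathbb{N}}$, arranged so that $\Phi(x) \mathrel{E_0} \Phi(y)$ holds precisely when $x$ and $y$ eventually lie in a common $F_n$-block, that is, precisely when $x \mathrel{E} y$.

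The core of the argument is the equivalence with the $\mathbb{Z}$-action condition, essentially the Slaman--Steel--Weiss theorem that the orbit equivalence relation $E_T$ of any Borel automorphism $T$ of $X$ is hyperfinite. I would split $X$ into the $T$-invariant Borel sets $P = \{x : T^n x = x \text{ for some } n \geq 1\}$ and $F = X \setminus P$; on $P$ the relation is finite, hence hyperfinite, and since a union of two hyperfinite relations over complementary invariant Borel sets is hyperfinite, only the free part $F$ requires attention. On $F$ I would run the marker machinery: for each $k$, produce a Borel complete section $M_k \subseteq F$ whose successive points along each orbit are at distance at least $k$, let $\mathcal{F}_k$ be the finite Borel equivalence relation of lying in the same block cut out by $M_k$, and argue that $E_T \upharpoonright F = \bigcup_k \mathcal{F}_k$. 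The remaining, harder direction, ``$E$ hyperfinite $\Rightarrow$ $E = E_T$ for a Borel $\mathbb{Z}$-action $T$'', calls for a similar but more delicate construction, organizing each $E$-class --- which by hyperfiniteness is a countable increasing union of finite $F_n$-blocks --- into a single bi-infinite chain (with finite and one-sided chains handled separately), Borel-uniformly in $x$.

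The main obstacle is precisely this marker analysis. Making the block relations $\mathcal{F}_k$ simultaneously increasing and exhaustive is not automatic: one must take the markers nested, $M_{k+1} \subseteq M_k$, so that $\mathcal{F}_k \subseteq \mathcal{F}_{k+1}$, and make them vanish, $\bigcap_k M_k = \varnothing$, with the gaps growing \emph{on both sides} of every point, since a careless construction can leave a fixed $x$ perpetually near the edge of its block and thus keep some fixed pair $x, T^j x$ from ever landing in a common block. Extracting such a marker sequence by a careful recursive Borel thinning is the technical heart of the Slaman--Steel--Weiss theorem and, with suitable modifications, of the converse construction as well; I would likewise regard the increasing-union preservation theorem as a nontrivial ingredient. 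Everything else --- the transversal and enumeration arguments, the pullbacks, and the encoding $\Phi$ --- is routine once Feldman--Moore and Lusin--Novikov are available.
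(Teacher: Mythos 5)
First, note that the paper does not prove this statement at all: it is quoted as background with a citation to Dougherty--Jackson--Kechris, so there is no internal proof to compare against; your sketch can only be measured against the standard argument from the literature.

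Most of your architecture is the standard one (the Slaman--Steel marker argument for the $\mathbb{Z}$-action direction, selectors/Feldman--Moore for the bookkeeping, coding block histories to get hyperfinite $\Rightarrow\ \leq_B E_0$), and your identification of the marker subtleties -- nested, vanishing markers so that every pair $x, \varphi^j(x)$ eventually shares a block -- is exactly right. But there is one genuine gap: the ``second auxiliary fact'' you invoke as a black box, that an increasing union of countable hyperfinite Borel equivalence relations is hyperfinite, is \emph{not} a theorem of Dougherty--Jackson--Kechris. It is the well-known Union Problem, which is still open; only special cases are known (increasing unions of \emph{finite} relations, which is just the definition of hyperfiniteness, finite-index situations, and results under additional hypotheses such as Borel boundedness). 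Since you route both ``$E \leq_B E_0 \Rightarrow E$ hyperfinite'' and ``$E$ hypersmooth $\Rightarrow E$ hyperfinite'' through this statement, those two implications are unproven in your sketch. Worse, in your application the approximating relations are countable and smooth, so the instance of the union statement you need is precisely the implication ``countable hypersmooth $\Rightarrow$ hyperfinite'' that you are trying to establish; invoking it as a known preservation theorem is therefore either an appeal to an open problem or circular.

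The repair is to treat ``countable hypersmooth $\Rightarrow$ hyperfinite'' as a second hard ingredient on a par with Slaman--Steel, not as bookkeeping. The actual Dougherty--Jackson--Kechris proof works directly with the increasing sequence of smooth countable subrelations $F_n \subseteq E$: using Borel transversals/selectors for the $F_n$ one builds, uniformly in a Borel way, either an embedding of $E$ into $E_0$ or a $\mathbb{Z}$-ordering of each $E$-class (after which your marker argument applies), rather than appealing to any closure of hyperfiniteness under increasing unions. With that implication proved directly, your pullback argument for $E \leq_B E_0$ does go through, since the pulled-back relations $(\varphi \times \varphi)^{-1}(R_n)$ are smooth and contained in $E$, so $E$ is countable hypersmooth. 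The remaining pieces of your sketch (hyperfinite $\Rightarrow$ hypersmooth, countable smooth $\Rightarrow$ hyperfinite, hyperfinite $\Rightarrow\ \leq_B E_0$, and both directions of the $\mathbb{Z}$-action equivalence) are correct in outline.
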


For an example of a hypersmooth Borel equivalence relation which is not countable, consider the Borel equivalence relation $E_1$ on the Polish space $(2^\mathbb{N})^{\natnum}$ defined by
\[ x E_1 y \Longleftrightarrow \exists m\ \forall n \geq m\ x(n)=y(n)\]
It turns out that hypersmooth Borel equivalence relations are exactly those Borel equivalence relations that are Borel reducible to $E_1$ \cite[Proposition 8.1.4]{Gao09}. Combining this fact with Theorem \ref{theorem-hyperfiniteequivalent}, we obtain the following corollary.

\begin{corollary}\label{corollary-maincorollary} If $E$ is a countable Borel equivalence relation such that $E \leq_B E_1$, then $E$ is hyperfinite.
\end{corollary}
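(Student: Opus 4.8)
The plan is to deduce the corollary immediately from the two facts recalled just above, so the argument is genuinely short. First I would invoke the characterization of hypersmooth Borel equivalence relations as exactly those that are Borel reducible to $E_1$ \cite[Proposition 8.1.4]{Gao09}: since $E \leq_B E_1$ by hypothesis, this already gives that $E$ is hypersmooth. Then I would feed this into the Dougherty--Jackson--Kechris theorem (Theorem \ref{theorem-hyperfiniteequivalent}): among its equivalent conditions for a \emph{countable} Borel equivalence relation are ``hypersmooth'' and ``hyperfinite''. Since $E$ is countable by hypothesis and hypersmooth by the previous step, we conclude that $E$ is hyperfinite.

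If one preferred a more self-contained argument, I would unwind the easy direction of the $E_1$ characterization. Fix a Borel reduction $f\colon X \to (2^{\natnum})^{\natnum}$ from $E$ to $E_1$, and for each $n \in \natnum$ let $F_n$ be the pullback under $f \times f$ of the ``tail-agreement'' relation $x \sim_n y \Leftrightarrow \forall k \geq n\ x(k)=y(k)$ on $(2^{\natnum})^{\natnum}$. Each $\sim_n$ is smooth, since the coordinate-dropping map $x \mapsto (x(k))_{k \geq n}$ reduces it to equality; hence each $F_n$ is smooth, being reduced to equality by $x \mapsto (f(x)(k))_{k\geq n}$. The $F_n$ are increasing because the $\sim_n$ are, and since $f$ is a reduction and $E_1 = \bigcup_n \sim_n$ we get $\bigcup_n F_n = E$. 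Thus $E$ is hypersmooth, and one finishes as before via Theorem \ref{theorem-hyperfiniteequivalent}.

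I do not anticipate any real obstacle; all the substance is contained in the cited results, and the verification above is routine. The one conceptual point worth flagging is the apparent tension in the statement: $E_1$ is itself not a countable Borel equivalence relation (indeed it is not Borel reducible to any countable one), so it is precisely the countability hypothesis on $E$ that does the work, pushing $E$ all the way down to $E_0$ rather than merely below $E_1$.
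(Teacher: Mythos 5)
Your first paragraph is exactly the paper's argument: the cited characterization of hypersmoothness via $E_1$ gives that $E$ is hypersmooth, and Theorem \ref{theorem-hyperfiniteequivalent} then yields hyperfiniteness from countability. The self-contained unwinding in your second paragraph (pulling back the tail relations through the reduction) is also correct, but it is just the easy direction of the cited fact rather than a different route.
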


\subsection{Background from topological dynamics}

A topological dynamical system is a pair $(X,\varphi)$ where $X$ is a compact metrizable topological space and $\varphi: X \rightarrow X$ is a continuous map. Given a topological dynamical system $(X,\varphi)$, a subset $Y \subseteq X$ is said to be $\varphi$-\textit{invariant} if $\varphi[Y] \subseteq Y$. A \textit{subsystem} of a topological dynamical system $(X,\varphi)$ is a pair of the form $(Y,\varphi)$ where $Y$ is a non-empty closed $\varphi$-invariant subset of $X$.

A topological dynamical system $(X,\varphi)$ is said to be \textit{minimal} if $(X,\varphi)$ has no proper subsystems. Equivalently, $(X,\varphi)$ is minimal if for every $\varphi$-invariant closed subset $Y \subseteq X$ we have either $Y = \emptyset$ or $Y = X$.  Equivalently, $(X,\varphi)$ is minimal if and only if for every $x \in X$ the forward orbit $\orbit(x)={\{\varphi^n(x):n \in \mathbb{N}\}}$ of $x$ is dense in $X$.

A point $x \in X$ in a topological dynamical system $(X,\varphi)$ is said to be \textit{almost periodic} if for every non-empty open neighborhood $U$ of $x$, the set
\[R=\{i \in \mathbb{N}: \varphi^i(x) \in U\}\]
of return times has bounded gaps, i.e. there exists $l \geq 1$ such that for all $n \in \mathbb{N}$
\[R \cap \{n,n+1,\dots,n+l\}\neq \emptyset\]
If $X$ is the forward orbit closure $\shiftorbit(x)$ of some almost periodic point $x \in X$, then $(X,\varphi)$ is minimal. Conversely, if $(X,\varphi)$ is minimal, then every $x \in X$ is almost periodic and has dense orbit \cite[Theorem 2.19]{Kurka03}.

Given two topological dynamical systems $(X,\varphi)$ and $(Y,\psi)$, we say that $(Y,\psi)$ is a \textit{factor} of $(X,\varphi)$ if there exists a continuous surjection $\pi: X \rightarrow Y$ such that
\[\pi \circ \varphi = \psi \circ \pi\]
If $\pi: X \rightarrow Y$ is also a homeomorphism, then $(X,\varphi)$ and $(Y,\psi)$ are said to be \textit{topologically conjugate} and $\pi$ is called a \textit{topological conjugacy}. Similarly, we define the class of \textit{pointed topological dynamical systems} as the class of triples of the form $(X,\varphi,x)$ where $(X, \varphi)$ is a topological dynamical system and $x \in X$. Two pointed systems $(X, \varphi, x)$ and $(Y, \psi, y)$ are said to be \textit{(pointed) topologically conjugate} if there exists a topological conjugacy $\pi: X \rightarrow Y$ between $(X, \varphi)$ and $(Y, \psi)$ such that $\pi(x)=y$.

A topological dynamical system $(X,\varphi)$ is said to be \textit{equicontinuous} if the family $\{\varphi^n: n \in \natnum\}$ of functions is equicontinuous at every point. It is well-known that every topological dynamical system $(X,\varphi)$ admits a \textit{maximal equicontinuous factor} $(Y,\psi)$ in the sense that $(Y,\psi)$ is equicontinuous and any equicontinuous factor of $(X,\varphi)$ is a factor of $(Y,\psi)$. The maximal equicontinuous factor of a topological dynamical system is unique up to topological conjugacy \cite[Theorem 2.44]{Kurka03}.

An \textit{alphabet} is a finite set with at least two elements. For the rest of this paper, fix an alphabet $\alphabet \in \mathbb{N}$. Consider the topological space $\alphabet^{\mathbb{Z}}$ together with the left-shift map $\seqshift: \alphabet^{\mathbb{Z}} \rightarrow \alphabet^{\mathbb{Z}}$ defined by $(\seqshift(\alpha))(i)=\alpha(i+1)$ for all $i \in \mathbb{Z}$ and $\alpha \in \alphabet^{\mathbb{Z}}$. It is easily checked that $(\alphabet^{\mathbb{Z}},\seqshift)$ is a topological dynamical system. A \textit{subshift} over the alphabet $\alphabet$ is a subsystem $(O,\seqshift)$ of the topological dynamical system $(\alphabet^{\mathbb{Z}},\seqshift)$ such that $\seqshift[O]=O$. For notational convenience, we shall often drop the left-shift map $\seqshift$ and refer to $O$ as a subshift. Moreover, we shall exclude the trivial cases and assume henceforth that the underlying topological spaces of subshifts are infinite sets. Thus, a subshift over the alphabet $\alphabet$ is a closed infinite subset of $\alphabet^{\mathbb{Z}}$ which is invariant under both $\seqshift$ and $\seqshift^{-1}$.

A subshift $O \subseteq \alphabet^{\mathbb{Z}}$ is said to be \textit{minimal} if the topological dynamical system $(O,\seqshift)$ is minimal. Being a closed subspace of a Cantor space, any subshift is totally disconnected, compact, and metrizable. If it is also minimal, then it has no isolated points and hence is homeomorphic to a Cantor space. The following well-known theorem \cite[Theorem 6.2.9]{LindMarcus95} characterizes the factor maps between subshifts.

\begin{theorem}[Curtis-Hedlund-Lyndon]\label{curtishedlund} Let $X,Y \subseteq \alphabet^{\mathbb{Z}}$ be subshifts and let
\[\pi: X \rightarrow Y\]
be a continuous map from $X$ to $Y$ commuting with $\sigma$. Then there exist $i \in \mathbb{N}$ and a \textit{block code}, i.e. a function $C: \alphabet^{2i+1} \rightarrow \alphabet$, such that
\[ (\pi(\alpha))(k)=C(\alpha[k-i,k+i])\]
for all $k \in \mathbb{Z}$ and $\alpha \in X$, where $\alpha[k,l]$ denotes the subblock
\[(\alpha(k),\alpha(k+1),\dots,\alpha(l))\]
of the bi-infinite sequence $\alpha$.
\end{theorem}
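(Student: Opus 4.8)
The plan is to carry out the classical argument, whose only substantive ingredient is a compactness reduction to a finite block length. First I would consider the map $f \colon X \to \alphabet$ defined by $f(\alpha) = (\pi(\alpha))(0)$. Since $\pi$ is continuous and the evaluation $\beta \mapsto \beta(0)$ is continuous, $f$ is continuous; and since $\alphabet$ carries the discrete topology and is finite, each fiber $f^{-1}(a)$ with $a \in \alphabet$ is a clopen subset of $X$, so that $X = \bigsqcup_{a \in \alphabet} f^{-1}(a)$ is a partition of $X$ into finitely many clopen sets.

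Next I would extract a uniform radius. For each $\alpha \in X$, since the cylinders $\{\beta \in X : \beta[-n,n] = \alpha[-n,n]\}$, for $n \in \mathbb{N}$, form a neighborhood basis at $\alpha$ and $f^{-1}(f(\alpha))$ is an open set containing $\alpha$, there is some $n_\alpha \in \mathbb{N}$ with $\{\beta \in X : \beta[-n_\alpha,n_\alpha] = \alpha[-n_\alpha,n_\alpha]\} \subseteq f^{-1}(f(\alpha))$. By compactness of $X$, finitely many of these cylinders, say those centered at $\gamma_1, \dots, \gamma_m$ with radii $n_{\gamma_1}, \dots, n_{\gamma_m}$, cover $X$; let $i = \max_j n_{\gamma_j}$. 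I then claim that $f(\alpha)$ depends only on the block $\alpha[-i,i]$: if $\alpha, \beta \in X$ satisfy $\alpha[-i,i] = \beta[-i,i]$, choose $j$ with $\alpha$ in the $j$-th covering cylinder; since $n_{\gamma_j} \leq i$, the point $\beta$ agrees with $\gamma_j$ on $[-n_{\gamma_j}, n_{\gamma_j}]$ as well, so $\beta$ lies in the same cylinder and hence $f(\alpha) = f(\gamma_j) = f(\beta)$. This compactness step is the heart of the proof; everything else is bookkeeping.

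Finally I would package this into a block code and use equivariance. Define $C \colon \alphabet^{2i+1} \to \alphabet$ by $C(w) = f(\alpha)$ whenever $w = \alpha[-i,i]$ for some $\alpha \in X$ — which is well defined by the previous paragraph — and by an arbitrary value on words of length $2i+1$ that do not occur in any element of $X$. Since $\pi$ commutes with $\sigma$, it commutes with $\sigma^k$ for every $k \in \mathbb{Z}$ (using that $\sigma$ is invertible on subshifts), and therefore, for all $\alpha \in X$ and $k \in \mathbb{Z}$,
\[ (\pi(\alpha))(k) = (\sigma^k(\pi(\alpha)))(0) = (\pi(\sigma^k(\alpha)))(0) = f(\sigma^k(\alpha)) = C\bigl((\sigma^k(\alpha))[-i,i]\bigr) = C(\alpha[k-i,k+i]), \]
which is exactly the asserted formula. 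The main obstacle, as indicated, is the passage from the pointwise local constancy of $f$ to a single radius $i$ that works simultaneously for all $\alpha$; this is precisely where the compactness of the subshift $X$ is essential.
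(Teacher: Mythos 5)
Your proof is correct: the compactness extraction of a uniform radius for the locally constant map $\alpha \mapsto (\pi(\alpha))(0)$, followed by shift-equivariance (using that $\sigma$ is bijective on subshifts to handle all $k \in \mathbb{Z}$), is the standard Curtis--Hedlund--Lyndon argument. The paper does not prove this theorem but cites it from Lind and Marcus, where essentially this same argument is given, so there is nothing to reconcile beyond noting that your write-up matches the textbook proof.
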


For any block code $C: \alphabet^{2i+1} \rightarrow \alphabet$, the natural number $i$ called the \textit{length} of the block code $C$ and is denoted by $|C|$. For any topological conjugacy $\pi: X \rightarrow Y$ between subshifts $X$ and $Y$, we define the \textit{length} of $\pi$ to be the natural number
\[|\pi|=max\{min\{|C|: C \text{ induces } \pi\},min\{|C|: C \text{ induces } \pi^{-1}\}\}\]

\subsection{Odometers}

Let $(u_i)_{i \in \mathbb{N}}$ be a sequence of natural numbers such that $(u_i)_{i \in \mathbb{N}}$ is not eventually constant, $u_i > 1$ and $u_i | u_{i+1}$ for all $i \in \mathbb{N}$. Consider the sequence of canonical group homomorphisms
\[\mathbb{Z}_{u_0} \longleftarrow \mathbb{Z}_{u_1} \longleftarrow \mathbb{Z}_{u_2} \cdots\]
where $\mathbb{Z}_{u_i}$ denotes the cyclic group of order $s_i$. Let $\odometer((u_i)_{i \in \mathbb{N}})$ be the inverse limit group
\[ \odometer((u_i)_{i \in \mathbb{N}}):=\varprojlim \mathbb{Z}_{u_i}=\{(m_i) \in \prod_{i \in \mathbb{N}} \mathbb{Z}_{u_i}: m_j \equiv m_i\ (mod\ u_i)\ \text{for } j>i\} \]
with the induced topology. Then the pair $(\odometer((u_i)_{i \in \mathbb{N}}), \odometeraddition)$ is an equicontinuous topological dynamical, where $\odometeraddition(h)=h+\hat{1}$ and $\hat{1}=(1,1,1,\dots)$. Moreover, it is easily checked that $\hat{0}=(0,0,\dots)$ is an almost periodic point with dense orbit. Hence, $(\odometer((u_i)_{i \in \mathbb{N}}), \odometeraddition)$ is minimal. The topological dynamical system $(\odometer((u_i)_{i \in \mathbb{N}}), \odometeraddition)$ is called the \textit{odometer} associated with $(u_i)_{i \in \mathbb{N}}$.

The classification problem for odometers is central to the proof of the main theorem of this paper. We will next recall some basic results regarding the classification of odometers up to topological conjugacy. For a detailed survey of odometers, we refer the reader to \cite{Downarowicz05}.

A \textit{supernatural number} is a formal product $\prod_{i \in \natnump} \textrm{p}_i^{k_i}$ where $\textrm{p}_i$ is the $i$-th prime number and $k_i \in \mathbb{N} \cup \{\infty\}$ for all $i \in \mathbb{N}$. For each sequence $(u_i)_{i \in \mathbb{N}}$ of positive integers, define $\lcmprod(u_i)_{i \in \mathbb{N}}$ to be the supernatural number $\mathbf{u}=\prod_{i \in \natnump} \textrm{p}_i^{k_i}$ where
\[ k_i=sup\{j \in \mathbb{N}: \exists m \in \mathbb{N}\ \textrm{p}_i^j | u_m \} \]
Given a supernatural number $\mathbf{u}$, any sequence $(u_i)_{i \in \mathbb{N}}$ such that $\lcmprod(u_i)_{i \in \mathbb{N}}=\mathbf{u}$ and $u_i | u_{i+1}$ for all $i \in \mathbb{N}$ will be called a \textit{factorization} of $\mathbf{u}$ and any positive integer $q$ dividing some $u_i$ will be called a \textit{factor} of $\mathbf{u}$. It turns out that the set of supernatural numbers is a complete set of invariants for topological conjugacy of odometers and hence the topological conjugacy problem for odometers is smooth.

\begin{theorem}\cite{BuescuStewart95}\label{theorem-conjugacyodometer} The odometers $(\odometer((u_i)_{i \in \mathbb{N}}),\odometeraddition)$ and $(\odometer((v_i)_{i \in \mathbb{N}}), \odometeraddition)$ are topologically conjugate if and only if $\lcmprod(u_i)_{i \in \mathbb{N}}=\lcmprod(v_i)_{i \in \mathbb{N}}$.
\end{theorem}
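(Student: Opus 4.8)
The plan is to prove both implications separately, the reverse implication by building an explicit isomorphism of odometers and the forward implication by isolating a purely dynamical conjugacy invariant that recovers the supernatural number.

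For the direction ($\Leftarrow$), I would first observe that it suffices to produce a \emph{topological group isomorphism} $\pi \colon \odometer((u_i)_{i\in\natnum}) \to \odometer((v_i)_{i\in\natnum})$ with $\pi(\hat 1)=\hat 1$: since $\odometeraddition$ is translation by $\hat 1$ in each odometer, any such $\pi$ automatically satisfies $\pi\circ\odometeraddition=\odometeraddition\circ\pi$, and being a continuous bijection between compact Hausdorff spaces it is a homeomorphism, hence a topological conjugacy. To build $\pi$, set $\mathbf u=\lcmprod(u_i)_{i\in\natnum}=\lcmprod(v_i)_{i\in\natnum}$ and note first that every $u_i$ divides some $v_j$ and every $v_j$ divides some $u_i$; this is a short computation with prime factorizations, using that for each prime $p$ the exponent of $p$ in $\mathbf u$ is realized cofinally along each of the two divisibility chains. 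Consequently one can extract strictly increasing sequences $(i_k)_{k\in\natnum}$ and $(j_k)_{k\in\natnum}$ with
\[ u_{i_0}\mid v_{j_0}\mid u_{i_1}\mid v_{j_1}\mid u_{i_2}\mid\cdots \]
Passing to a cofinal subsequence does not change an inverse limit, so $\odometer((u_i)_{i\in\natnum})\cong\varprojlim_k \mathbb{Z}_{u_{i_k}}$ and $\odometer((v_i)_{i\in\natnum})\cong\varprojlim_k \mathbb{Z}_{v_{j_k}}$, and both are canonically isomorphic to the inverse limit $W$ of the single interleaved chain displayed above (the odd-indexed, respectively even-indexed, terms being cofinal in it). All the transition maps and all of these identifications send $1\mapsto 1$, so the composite isomorphism $\pi$ sends $\hat 1\mapsto\hat 1$, as required.

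For the direction ($\Rightarrow$), I would exhibit an invariant that recovers $\mathbf u$. For a minimal system $(X,\varphi)$ on a Cantor space and $q\in\natnump$, say that $q$ is \emph{admissible for} $(X,\varphi)$ if there is a clopen partition $X=U_0\sqcup\cdots\sqcup U_{q-1}$ with $\varphi[U_j]=U_{(j+1)\bmod q}$ for every $j$; equivalently, the cyclic rotation $(\mathbb{Z}_q,\,n\mapsto n+1)$ is a topological factor of $(X,\varphi)$, the partition being the factor map. The set of admissible $q$ is visibly a topological-conjugacy invariant. The claim is that $q$ is admissible for $(\odometer((u_i)_{i\in\natnum}),\odometeraddition)$ if and only if $q$ is a factor of $\mathbf u$, which certainly determines $\mathbf u$ (the exponent of $p$ in $\mathbf u$ is $\sup\{a\in\natnum : p^a \text{ is admissible}\}$). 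If $q\mid u_i$, then the composition $\odometer((u_i)_{i\in\natnum})\to\mathbb{Z}_{u_i}\to\mathbb{Z}_q$ is the required factor map. Conversely, given an admissible partition, a compactness argument lets one choose $i$ large enough that each $U_j$ is a union of cylinders depending only on the first $i$ coordinates, i.e. $U_j=\pi_i^{-1}[A_j]$ for a partition $\{A_j\}$ of $\mathbb{Z}_{u_i}$; then $\varphi[U_j]=U_{(j+1)\bmod q}$ forces $A_j+1=A_{(j+1)\bmod q}$, so the map $\mathbb{Z}_{u_i}\to\mathbb{Z}_q$ sending $A_j$ to $j$ is an affine map $a\mapsto a+c$ reduced mod $q$, which is well defined only if $q\mid u_i$. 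Hence topologically conjugate odometers have the same admissible integers and therefore the same associated supernatural number.

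I expect the forward direction to be where the real content lies. The delicate point is that a topological conjugacy between odometers need not respect the group structure a priori, so one cannot simply quote a classification of inverse limits; the admissibility device circumvents this, and the only nontrivial step inside it is the compactness argument that "finitizes" an arbitrary clopen partition, the remaining observation — that an affine surjection $\mathbb{Z}_{u_i}\to\mathbb{Z}_q$ exists exactly when $q\mid u_i$ — being elementary. The reverse direction, by contrast, is essentially bookkeeping with divisibility chains and cofinal subsystems of inverse limits.
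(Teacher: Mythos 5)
Your argument is correct. Note that the paper itself offers no proof of this statement: it is quoted as a known classification result with a citation to Buescu--Stewart, so there is no internal proof to compare against. Your two halves are both sound and are essentially the standard route. For ($\Leftarrow$), the key observations -- that $\lcmprod(u_i)=\lcmprod(v_i)$ together with the divisibility chains forces every $u_i$ to divide some $v_j$ and conversely, that one may interleave the two chains, and that an inverse limit is unchanged (with $\hat 1\mapsto\hat 1$) when passing to a cofinal subchain -- are all valid, and reducing topological conjugacy to a group isomorphism fixing $\hat 1$ is exactly the right simplification. For ($\Rightarrow$), your ``admissible $q$'' invariant is the periodic spectrum (existence of the cyclic rotation $\mathbb{Z}_q$ as a factor), and the two nontrivial steps check out: a clopen finite partition of the inverse limit is determined by a single coordinate $\pi_i$ for $i$ large (using compactness and the fact that the $i$-th coordinate determines all earlier ones), and the resulting equivariance $A_j+1=A_{(j+1)\bmod q}$ iterated $u_i$ times forces $q\mid u_i$; this correctly circumvents the fact that a conjugacy need not be a group isomorphism. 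The only points you gloss are routine (surjectivity of the projections $\pi_i$, non-emptiness of the pieces), so the proof stands as a legitimate self-contained justification of the cited theorem.
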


\subsection{Toeplitz sequences and Toeplitz subshifts}
In this subsection, we will give a detailed overview of Toeplitz sequences and Toeplitz subshifts following \cite{Williams84, Downarowicz05, DownarowiczKwiatkowskiLacroix95}.

A bi-infinite sequence $\alpha \in \shiftspace$ is called a \textit{Toeplitz sequence} over the alphabet $\alphabet$ if for all $i \in \mathbb{Z}$ there exists $j \in \natnump$ such that $\alpha(i+kj)=\alpha(i)$ for all $k \in \mathbb{Z}$. Equivalently, Toeplitz sequences are those in which every subblock appears periodically. Periodic sequences are obviously Toeplitz. However, we shall exclude these since we are interested in infinite subshifts generated by Toeplitz sequences and periodic sequences have finite orbits under $\seqshift$. From now on, all Toeplitz sequences are assumed to be non-periodic unless stated otherwise.

In our analysis of the structure of Toeplitz sequences, we will need the following objects associated to each sequence $\alpha \in \shiftspace$ for each $p \in \mathbb{N}^+$.

\begin{itemize}
\item The \textit{p-periodic parts} of $\alpha$ is defined to be the set of indices
\[ Per_p(\alpha):= \bigcup_{a \in \alphabet} Per_p(\alpha,a)\]
where $Per_p(\alpha,a):=\{i \in \mathbb{Z}: \forall k \in \mathbb{Z}\ \alpha(i+pk)=a\}$ for each symbol $a \in \alphabet$. In other words,
\[Per_p(\alpha)=\{i \in \mathbb{Z}: \forall k \in \mathbb{Z}\ \alpha(i)=\alpha(i+pk)\}\]
$p$ is called a \textit{period} of $\alpha$ if $Per_p(\alpha)\neq \emptyset$. It follows from the definitions that the sequence $\alpha$ is a Toeplitz sequence if and only if $\bigcup_{p \in \natnump}Per_p(\alpha) = \mathbb{Z}$.
\item The sequence obtained from $\alpha$ by replacing $\alpha(i)$ with the blank symbol $\blanksymbol$ for each $i \notin Per_p(\alpha)$ will be called the \textit{p-skeleton} of $\alpha$. The $p$-skeleton of $\alpha$ will be denoted by $Skel(\alpha,p)$.
\item Any subblock of the $p$-skeleton of $\alpha$ which consists of non-blank symbols and which is preceded and followed by a blank symbol will be called a \textit{filled p-block} of the $p$-skeleton of $\alpha$.
\item The indices of the $p$-skeleton of $\alpha$ containing the blank symbol will be called the \textit{p-holes} of $\alpha$.
\item The set of \textit{p-symbols} of $\alpha$ is the set of words
\[W_p(\alpha)=\{\alpha[kp,(k+1)p):k \in \mathbb{Z}\}\]
\end{itemize}

Let $p,q \in \natnump$ be periods of some sequence $\alpha \in \shiftspace$. It easily follows from the definitions that $Per_p(\alpha) \subseteq Per_q(\alpha)$ whenever $p|q$; and that
\[Per_{gcd(p,q)}(\alpha)=Per_p(\alpha)\]
whenever $Per_p(\alpha) \subseteq Per_q(\alpha)$. A positive integer $p \in \natnump$ is an \textit{essential period} of $\alpha$ if $p$ is a period of $\alpha$ and for all $q < p$ we have $Per_p(\alpha) \neq Per_q(\alpha)$. Equivalently, $p$ is an essential period of $\alpha$ if and only if the $p$-skeleton of $\alpha$ is not periodic with any smaller period. It can easily be checked that if $p$ and $q$ are essential periods of $\alpha$, then so is $lcm(p,q)$. Thus we can associate a supernatural number to each sequence whose set of periods is non-empty by taking the least common multiple of all essential periods.

The \textit{scale} of a Toeplitz sequence $\alpha$ is the supernatural number $\mathbf{u}_{\alpha}=\lcmprod(u_i)_{i \in \mathbb{N}}$ where $u_i$ is an enumeration of the essential periods of $\alpha$.

Every subblock of a Toeplitz sequence $\alpha$ appears periodically along $\alpha$ and hence the return times of $\alpha$ to any basic clopen subset of its shift orbit closure $\shiftorbit(\alpha)$ contains an infinite progression of the form $p+q \mathbb{Z}$. It follows that $\alpha$ is an almost periodic point of $(\shiftorbit(\alpha),\seqshift)$ and hence $\shiftorbit(\alpha)$ is a minimal subshift. A subshift $O$ is said to be a \textit{Toeplitz subshift} over the alphabet $\alphabet$ if $O=\shiftorbit(\alpha)$ for some Toeplitz sequence $\alpha \in \shiftspace$.

We will next prove that the maximal equicontinuous factor of a Toeplitz subshift $\shiftorbit(\alpha)$ is the odometer associated to the supernatural number $\mathbf{u}_{\alpha}$. The following results and the construction of the maximal equicontinuous factor are originally due to Williams \cite{Williams84}. We remark that even though the statements of the following lemmas are more general than Williams' original results, they can be proved with the same proofs.

\begin{lemma}\cite{Williams84}\label{lemma-williamslemma0} Let $p \in \natnump$ and for each $0 \leq k < p$, define 
\[A(\alpha,p,k):=\{\seqshift^i(\alpha): k \equiv i\ (mod\ p)\}\]
Then each element of $\overline{A(\alpha,p,k)}$ has the same $p$-skeleton as $\seqshift^k(\alpha)$, i.e. for each $a \in \alphabet$, we have that $Per_{p}(\seqshift^k(\alpha),a) = Per_{p}(\gamma,a)$ for all $\gamma \in \overline{A(\alpha,p,k)}$.
\end{lemma}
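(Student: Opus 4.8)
The plan is to prove the asserted equality $Per_{p}(\seqshift^{k}(\alpha),a)=Per_{p}(\gamma,a)$ by establishing the two inclusions separately, after a harmless reduction to the case $k=0$. Since $A(\alpha,p,k)=\{\seqshift^{k+pm}(\alpha):m\in\intg\}$ is precisely the orbit of $\beta:=\seqshift^{k}(\alpha)$ under $\seqshift^{p}$, and since $\beta$ is again a Toeplitz sequence (shifts of Toeplitz sequences are Toeplitz), it suffices to show that every $\gamma$ in the closure of $\{\seqshift^{pm}(\beta):m\in\intg\}$ satisfies $Per_{p}(\gamma,a)=Per_{p}(\beta,a)$ for every $a\in\alphabet$.

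For the inclusion $Per_{p}(\beta,a)\subseteq Per_{p}(\gamma,a)$, the point is that, for each fixed index $j$, the condition ``$j\in Per_{p}(\cdot,a)$'' is closed on $\shiftspace$, because
\[
\{\delta\in\shiftspace:j\in Per_{p}(\delta,a)\}=\bigcap_{l\in\intg}\{\delta\in\shiftspace:\delta(j+pl)=a\}
\]
is an intersection of clopen sets. A one-line computation shows $Per_{p}(\seqshift^{pm}(\beta),a)=Per_{p}(\beta,a)$ for every $m$, so whenever $j\in Per_{p}(\beta,a)$ every point of the $\seqshift^{p}$-orbit of $\beta$ lies in the closed set above, and hence so does the limit point $\gamma$.

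The reverse inclusion is the heart of the argument. Fix $j\in Per_{p}(\gamma,a)$; we must show $\beta(j+pl)=a$ for every $l\in\intg$. Consider the two ``column'' sequences $y=(\beta(j+pl))_{l\in\intg}$ and $z=(\gamma(j+pl))_{l\in\intg}$ in $\alphabet^{\intg}$. On the one hand, since $\beta$ is Toeplitz, every coordinate of $y$ is periodic: if $j+pl\in Per_{q}(\beta)$ then, after replacing $q$ by $lcm(p,q)$, we may assume $p\mid q$, and then $y(l+(q/p)t)=y(l)$ for all $t$; hence $\bigcup_{s\in\natnump}Per_{s}(y)=\intg$, so $y$ is an almost periodic point of its orbit closure and that orbit closure is minimal. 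On the other hand, writing $\gamma=\lim_{n}\seqshift^{pm_{n}}(\beta)$, reading off coordinate $j+pl$ shows $(\seqshift^{m_{n}}(y))(l)=(\seqshift^{pm_{n}}(\beta))(j+pl)\to\gamma(j+pl)=z(l)$ for every $l$, so $\seqshift^{m_{n}}(y)\to z$ and $z$ lies in the orbit closure of $y$. But $j\in Per_{p}(\gamma,a)$ forces $z$ to be the constant sequence with value $a$; since a constant sequence is a one-point subsystem, minimality of the orbit closure of $y$ forces it to equal $\{z\}$, so $y=z$ and $\beta(j+pl)=a$ for all $l$, as required.

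I expect the reverse inclusion to be the only genuine obstacle. A naive attempt to compare $\gamma$ with $\beta$ over a large finite window fails, since a Toeplitz sequence may be constant over arbitrarily long stretches of a fixed residue class modulo $p$, so a finite portion of $\gamma$ need not witness the non-constancy of $\beta$ along that class. Passing to the column sequence $y$ and invoking minimality of its orbit closure — which forbids a constant sequence from arising as a limit of shifts of $y$ unless $y$ is itself constant — is exactly what circumvents this difficulty.
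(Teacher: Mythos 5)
Your proof is correct. Note that the paper itself does not spell out an argument for this lemma: it cites Williams and remarks that the more general statement ``can be proved with the same proofs.'' Williams' classical argument is finitary: one inclusion is exactly your closedness observation, and for the reverse inclusion one notes that if $j$ is a $p$-hole of $\beta=\seqshift^k(\alpha)$, then two distinct symbols $b_1\neq b_2$ occur along the residue class $j \bmod p$, and since $\beta$ is Toeplitz each of them recurs along an arithmetic progression (with common difference a multiple of $p$); hence both symbols appear in every sufficiently long window of that column, so they survive in any limit of $\seqshift^{pm_n}(\beta)$ and $j$ remains a $p$-hole of $\gamma$ (combined with the first inclusion this gives equality of the skeletons). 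Your route packages the same mechanism more abstractly: you pass to the column sequence $y=(\beta(j+pl))_l$, check it is Toeplitz in the broad sense, and invoke minimality of its orbit closure to rule out the constant sequence arising as a limit unless $y$ is already constant. The two arguments buy essentially the same thing; yours trades the explicit syndetic-recurrence computation for the almost-periodicity/minimality machinery already set up in Section 2, while Williams' version avoids any appeal to minimality. One small point worth making explicit in your write-up: when $\gamma=\lim_n\seqshift^{pm_n}(\beta)$ the integers $m_n$ may be negative, so $z$ lies a priori in the two-sided orbit closure of $y$; this is harmless because the forward orbit closure of an almost periodic point is $\seqshift$-bi-invariant (or, equivalently, because every point of a minimal system has dense forward orbit), but it deserves a sentence.
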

\begin{lemma}\cite{Williams84} \label{lemma-williamslemma1} Let $(r_i)_{i \in \mathbb{N}}$ be a factorization of $\mathbf{u}_{\alpha}$ and let $A(\alpha,r_i,k)$ be defined as in Lemma \ref{lemma-williamslemma0}. For each $i \in \mathbb{N}$ and $0 \leq k < r_i$, we have that
\begin{itemize}
\item[a.] $\{\overline{A(\alpha,r_i,k)}: 0 \leq k < r_i\}$ is a partition of $\shiftorbit(\alpha)$.
\item[b.] $\overline{A(\alpha,r_i,k)} \subseteq \overline{A(\alpha,r_j,l)}$ for all $j < i$ and $k \equiv l\ (mod\ r_j)$.
\item[c.] $\seqshift[\overline{A(\alpha,r_i,r_i-1)}]=\overline{A(\alpha,r_i,0)}$ and $\seqshift[\overline{A(\alpha,r_i,k)}]=\overline{A(\alpha,r_i,k+1)}$ for all $0 \leq k < r_i-1$.
\end{itemize}
\end{lemma}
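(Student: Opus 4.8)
The plan is to read off (b) and (c) directly and to spend all the effort on (a); within (a), the covering statement is immediate and the pairwise disjointness of the closures is the one step I expect to be genuinely hard.

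For (c): $\shiftorbit(\alpha)$ is compact and $\seqshift$ is a continuous bijection of it, hence a homeomorphism, and homeomorphisms preserve closures; since $\seqshift[A(\alpha,r_i,k)]=\{\seqshift^{m+1}(\alpha):m\equiv k\ (mod\ r_i)\}$ equals $A(\alpha,r_i,k+1)$ for $0\le k<r_i-1$ and $A(\alpha,r_i,0)$ for $k=r_i-1$, applying $\seqshift$ and passing to closures gives (c). For (b): $j<i$ gives $r_j\mid r_i$, so $m\equiv k\ (mod\ r_i)$ forces $m\equiv k\equiv l\ (mod\ r_j)$; hence $A(\alpha,r_i,k)\subseteq A(\alpha,r_j,l)$ as sets, and (b) follows by taking closures.

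For (a), the covering half is clear: $\bigcup_{0\le k<r_i}A(\alpha,r_i,k)=\orbit(\alpha)$ is dense in $\shiftorbit(\alpha)$ by minimality, and the closure of a finite union is the union of the closures, so the $r_i$ sets $\overline{A(\alpha,r_i,k)}$ cover $\shiftorbit(\alpha)$. For disjointness, suppose $\gamma\in\overline{A(\alpha,r_i,k)}\cap\overline{A(\alpha,r_i,k')}$ with $0\le k<k'<r_i$. By Lemma \ref{lemma-williamslemma0}, $\gamma$ has the $r_i$-skeleton of both $\seqshift^k(\alpha)$ and $\seqshift^{k'}(\alpha)$; since $Per_{r_i}(\seqshift^k(\alpha),a)=Per_{r_i}(\alpha,a)-k$ for every $a\in\alphabet$, this forces each $Per_{r_i}(\alpha,a)$ to be invariant under translation by $k'-k$, and as it is also a union of residue classes modulo $r_i$ it is a union of residue classes modulo $d:=\gcd(r_i,k'-k)$; thus $Per_{r_i}(\alpha)=Per_d(\alpha)$ with $0<d<r_i$. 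When $(r_i)_{i\in\natnum}$ is the canonical factorization by essential periods, this contradicts essentiality of $r_i$ and we are done.

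In general one must propagate the obstruction to deeper levels. Fix $j\ge i$ large enough that some essential period divides $r_j$ — possible since $\lcmprod(r_m)_{m\in\natnum}=\mathbf{u}_\alpha$ — so that $Per_{r_j}(\alpha)\ne\emptyset$. Choosing approximating sequences for $\gamma$ in $A(\alpha,r_i,k)$ and in $A(\alpha,r_i,k')$ and applying the pigeonhole principle to their exponents modulo $r_j$ (only finitely many residues occur, and each reduces modulo $r_i$ to $k$, respectively $k'$), one obtains $\gamma\in\overline{A(\alpha,r_j,l)}\cap\overline{A(\alpha,r_j,l')}$ with $l\equiv k$ and $l'\equiv k'$ modulo $r_i$; the skeleton computation at level $r_j$ then gives $Per_{r_j}(\alpha)=Per_{d_j}(\alpha)$ for some $d_j\mid r_j$ with $r_i\nmid d_j$ (else $r_i\mid l'-l\equiv k'-k\not\equiv 0$). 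Let $e_j$ be the least period with $Per_{e_j}(\alpha)=Per_{r_j}(\alpha)$; then $e_j$ is essential, and the identity $Per_{\gcd(p,q)}(\alpha)=Per_p(\alpha)$ — valid whenever $Per_p(\alpha)\subseteq Per_q(\alpha)$ — gives $e_j\mid d_j$, so $r_i\nmid e_j$. On the other hand $r_i$ is a factor of $\mathbf{u}_\alpha$, hence divides some essential period $e^*$, and for all large $j$ every essential period — in particular $e^*$ — divides $e_j$: from $e^*\mid r_j$ one gets $Per_{e^*}(\alpha)\subseteq Per_{r_j}(\alpha)=Per_{e_j}(\alpha)$, and essentiality of $e^*$ forces $e^*\mid e_j$ by the same identity. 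Choosing such a $j$ yields $r_i\mid e^*\mid e_j$, contradicting $r_i\nmid e_j$; hence no such $\gamma$ exists, completing (a). The only delicate points are this disjointness argument and the two small arithmetic facts it rests on — that every factor of $\mathbf{u}_\alpha$ divides an essential period, and that $e_j$ eventually absorbs every essential period — which I would isolate as preliminary observations about $Per$-sets.
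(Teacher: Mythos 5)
The paper never writes out a proof of this lemma: it is quoted from Williams \cite{Williams84}, with the remark that the slightly more general statements ``can be proved with the same proofs'', so there is no in-text argument to compare yours against. Judged on its own merits, your proof is correct. Parts (b) and (c) are indeed the routine observations you make, and in (a) the covering half and the disjointness argument are sound: the per-symbol identity $Per_p(\seqshift^k(\alpha),a)=Per_p(\alpha,a)-k$ combined with Lemma \ref{lemma-williamslemma0}, the passage from invariance of $Per_{r_i}(\alpha,a)$ under translation by $k'-k$ and by $r_i$ to invariance under their gcd, and the conclusion $Per_{r_i}(\alpha)=Per_d(\alpha)$ all check out. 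Your ``deepening'' step --- pigeonholing the exponents of approximating shifts modulo $r_j$ so as to place $\gamma$ in two distinct classes at a level $r_j$ with $Per_{r_j}(\alpha)\neq\emptyset$ --- is precisely what is needed because an arbitrary factorization of the scale may contain terms $r_i$ that are not themselves periods of $\alpha$; this is exactly the extra generality the paper attributes to the statement, and your two preliminary observations (every finite factor of $\mathbf{u}_\alpha$ divides an essential period, via closure of essential periods under lcm; and $e_j$ is essential with $e_j\mid d_j$, via the identity $Per_{\gcd(p,q)}(\alpha)=Per_p(\alpha)$ when $Per_p(\alpha)\subseteq Per_q(\alpha)$) are correct. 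Two presentational points only: the phrase ``for all large $j$ every essential period divides $e_j$'' should be read with the quantifiers in the order you actually use --- fix the essential period $e^*$ with $r_i\mid e^*$ first, then choose $j\geq i$ large enough that $e^*\mid r_j$, which simultaneously guarantees $Per_{r_j}(\alpha)\neq\emptyset$; and in your ``easy case'' the contradiction with essentiality of $r_i$ implicitly uses $Per_{r_i}(\alpha)\neq\emptyset$, which is automatic there since $r_i$ is an essential period. Neither affects correctness.
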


We note that by Lemma \ref{lemma-williamslemma0} and Lemma \ref{lemma-williamslemma1}.a, any essential period of $\alpha$ is an essential period of any $\gamma \in \shiftorbit(\alpha)$ and vice versa. Therefore, it makes sense to define the \textit{scale} of a Toeplitz subshift $O$ to be the supernatural number that is the least common multiple of all essential periods of some (equivalently, every) point of $O$.

Consider the map $\psi : \shiftorbit(\alpha) \rightarrow \odometer(u_i)_{i \in \mathbb{N}}$ given by
\[ \psi(x)=(m_i)_{i \in \mathbb{N}} \]
where $x \in \overline{A(\alpha,u_i,m_i)}$ and $\lcmprod(u_i)_{i \in \mathbb{N}}=\mathbf{u}_{\alpha}$. It is not difficult to check that the map $\psi$ is continuous. Moreover, we have that $\psi \circ \seqshift = \lambda \circ \psi$ and hence $\psi$ is a factor map. In order to show that $\odometer(u_i)_{i \in \mathbb{N}}$ is the maximal equicontinuous factor of $\shiftorbit(\alpha)$, it is sufficient to prove that $\psi^{-1}[\psi(\alpha)]=\{\alpha\}$. (For example, see \cite[Proposition 1.1]{Paul76}.)

Recall by Lemma \ref{lemma-williamslemma0} that the $u_i$-skeletons of the sequences in the set $\overline{A(\alpha,u_i,m_i)}$ are the same. Hence two sequences $\beta, \beta' \in \shiftorbit(\alpha)$ have the same $u_k$-skeleton whenever $\psi(\beta) \upharpoonright k+1 = \psi(\beta') \upharpoonright k+1$. This implies that $\psi$ is one to one on the set of Toeplitz sequences since every subblock of a Toeplitz sequence eventually appears in some $u_k$-skeleton. In particular, we have that $\psi^{-1}[\psi(\alpha)]=\{\alpha\}$, which completes the proof that $\odometer(u_i)_{i \in \mathbb{N}}$ is the maximal equicontinuous factor of $\shiftorbit(\alpha)$.

Recall that the maximal equicontinuous factor of a topological dynamical system is unique up to topological conjugacy. Consequently, it follows from Theorem \ref{theorem-conjugacyodometer} that topologically conjugate Toeplitz subshifts have the same scale.

\subsection{Various subclasses of Toeplitz subshifts}

Given a Toeplitz sequence $\alpha$ and a factorization $(u_i)_{i \in \mathbb{N}}$ of its scale $\mathbf{u}_{\alpha}$, we can imagine $\alpha$ to be obtained by a recursive construction where we start the construction with the two-sided constant sequence of blank symbols and replace the blank symbols corresponding to the indices $Per_{u_i}(\alpha)$ periodically with the appropriate symbols at the $i$-th stage. This way of understanding Toeplitz sequences from their constructions allows us to isolate some special types of Toeplitz sequences as considered by Downarowicz \cite[Section 9]{Downarowicz05}.

Of particular interest in this thesis will be the class of Toeplitz subshifts with separated holes. A Toeplitz subshift $O$ is said to have \textit{separated holes} with respect to $(u_i)_{i \in \mathbb{N}}$ if the minimal distance between the $u_i$-holes in the $u_i$-skeleton of every (equivalently, some) element of $O$ grows to infinity with $i$, where $(u_i)_{i \in \mathbb{N}}$ is a factorization of the scale of $O$.

It turns out that whether or not a Toeplitz subshift has separated holes is independent of the particular factorization $(u_i)_{i \in \mathbb{N}}$. Let $(u_i)_{i \in \mathbb{N}}$ and $(v_i)_{i \in \mathbb{N}}$ be two factorizations of the same supernatural number $\mathbf{u}$ and let $O$ be a Toeplitz subshift with scale $\mathbf{u}$. It is easily checked that $O$ has separated holes with respect to $(u_i)_{i \in \mathbb{N}}$ if and only if $O$ has separated holes with respect to $(v_i)_{i \in \mathbb{N}}$

We will next define a property that generalizes the property of having separated holes. Given a Toeplitz subshift $O$ and a Toeplitz sequence $\alpha \in O$, let $A(\alpha,p,k)$ be defined as in Lemma \ref{lemma-williamslemma0}. Notice that for any $\beta \in \shiftorbit(\alpha)$, regardless of whether or not $\beta$ is a Toeplitz sequence, we have that
\[ \{\overline{A(\beta,u_i,k)}: 0 \leq k < u_i\}=\{\overline{A(\alpha,u_i,k)}: 0 \leq k < u_i\}\]
since the orbit of $\beta$ is dense in $\shiftorbit(\alpha)$ by minimality. Therefore, this partition only depends on $u_i$ and it will be denoted by $Parts(\shiftorbit(\alpha),u_i)$. Moreover, every element of $\overline{A(\alpha,u_i,k)}$ has the same $u_i$-skeleton by Lemma \ref{lemma-williamslemma0}. Consequently, for each $W \in Parts(\shiftorbit(\alpha),u_i)$, we can define the $u_i$-\textit{skeleton} of $W$ to be the $u_i$-skeleton of some (equivalently, every) element of $W$ and denote it by $Skel(W,u_i)$. Define $Parts_*(O,u_i)$ to be the set
\[\{W \in Parts(O,u_i): Skel(W,u_i)(0) \neq \blanksymbol\ \wedge\ Skel(W,u_i)(-1) = \blanksymbol\}\]
For each $W \in Parts_*(O,u_i)$, let $length(W)$ be the smallest positive integer such that $Skel(u_i,W)(length(W))=\blanksymbol$. In other words, $length(W)$ is the length of the filled $u_i$-block of the $u_i$-skeleton of $W$ whose first non-blank symbol is positioned at index $0$. $O$ is said to have \textit{growing blocks} with respect to $(u_i)_{i \in \mathbb{N}}$ if
\[ \displaystyle \lim_{i \rightarrow \infty} min \{ length(W): W \in Parts_*(O,u_i)\}=+\infty\]
i.e., $O$ has growing blocks with respect to $(u_i)_{i \in \mathbb{N}}$ if the minimal length of filled $u_i$-blocks grows to infinity with $i$.

Recall that $Per_p(\alpha) \subseteq Per_q(\alpha)$ whenever $p|q$. It follows that if a Toeplitz subshift $O$ has separated holes with respect to some factorization of its scale $\mathbf{u}$, then it has separated holes with respect to any factorization of $\mathbf{u}$ and hence it has growing blocks with respect to any factorization of $\mathbf{u}$. Unlike having separated holes, having growing blocks is not independent of the particular factorization $(u_i)_{i \in \mathbb{N}}$. Consider the Toeplitz sequence whose $(2^k 5)$-skeletons restricted to the interval $[0,2^k 5)$ are given by
\begin{align*}
& 0 \blanksymbol \blanksymbol \blanksymbol 0 \\
& 0 \blanksymbol 1 \blanksymbol 0 0 \blanksymbol \blanksymbol \blanksymbol 0 \\
& 0 0 1 0 0 0 \blanksymbol \blanksymbol \blanksymbol 0 0 1 1 1 0 0 1 0 1 0 \\
& 0 0 1 0 0 0 \blanksymbol 1 \blanksymbol 0 0 1 1 1 0 0 1 0 1 0 0 0 1 0 0 0 \blanksymbol \blanksymbol \blanksymbol 0 0 1 1 1 0 0 1 0 1 0 \\
& \dots
\end{align*}
for each $k \in \mathbb{N}$. We initially start with the $5$-skeleton consisting of the repeated blocks $0 \blanksymbol \blanksymbol \blanksymbol 0$. At every odd stage $k$, we fill the hole in the middle of the leftmost $\blanksymbol \blanksymbol \blanksymbol$ block along each interval $[j2^k 5,(j+1)2^k 5)$ with the symbol $1$. At every even stage $k$, along each interval $[j2^k 5,(j+1)2^k 5)$, we fill the first two single holes with the symbol $0$, the remaining single holes with the symbol $1$, and replace the rightmost $\blanksymbol \blanksymbol \blanksymbol$ block by the block $101$. It is easily checked that the Toeplitz subshift generated by this Toeplitz sequence does not have growing blocks with respect to $(2^k 5)_{k \in \mathbb{N}}$. However, it does have growing blocks with respect to $(4^k 5)_{k \in \mathbb{N}}$.

\subsection{The standard Borel space of Toeplitz subshifts}

In this subsection, we will construct the standard Borel spaces of various subclasses of Toeplitz subshifts over the alphabet $\alphabet$.

The set $K(X)$ of non-empty compact subsets of a Polish space $X$ endowed with the topology induced by the Hausdorff metric is a Polish space \cite[Section 4.F]{Kechris95}. It is not difficult to check that the map $C \mapsto \seqshift[C]$ is a homeomorphism of $K(\shiftspace)$ and that the set
\[ \mathcal{S}_{\alphabet}=\{O \subseteq \shiftspace: O \text{ is a subshift}\} \]
of subshifts over the alphabet $\alphabet$ is a Borel subset of $K(\shiftspace)$ and hence is a standard Borel space \cite[Lemma 3]{Clemens09}. Recall that the following are equivalent for a subshift.
\begin{itemize}
\item[a.] $(O,\seqshift)$ is minimal.
\item[b.] For every $x \in O$, $x$ is almost periodic and $O=\shiftorbit(x)$.
\item[c.] For some $x \in O$, $x$ is almost periodic and $O=\shiftorbit(x)$.
\end{itemize}
There exists a sequence of Borel functions which select a dense set of points from each element of $K(\shiftspace)$ \cite[Theorem 12.13]{Kechris95} and hence we can check in a Borel way whether or not a compact subset of $\shiftspace$ is the closure of the orbit of an almost periodic point. It follows that the set $\mathcal{M}_{\alphabet}$ of minimal subshifts is a Borel subset of $\mathcal{S}_{\alphabet}$ and hence is a standard Borel space.

In order to construct the standard Borel space of Toeplitz subshifts, we will need the following theorem regarding the definability of Baire category notions.
\begin{theorem}\cite{SabokTsankov15} \label{theorem-nonmeagerlymany} Let $X$ be a Polish space and let $F(X)$ be the Effros Borel space $F(X)$ consisting of closed subsets of $X$. Then for any Borel subset $A \subseteq X$, the set
\[ \{F \in F(X): \exists^* x \in F\ x \in A\} \]
is Borel, where the quantifier $\exists^* x \in F$ stands for ``For non-meagerly many $x$ in $F$".
\end{theorem}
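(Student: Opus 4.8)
The plan is to reduce the statement to the classical Montgomery--Novikov theorem on the Borel definability of the category quantifier over a \emph{product}: if $Y$ is Polish, $Z$ is a standard Borel space, and $B\subseteq Z\times Y$ is Borel, then $\{z\in Z:B_z\text{ is non-meager in }Y\}$ is Borel, where $B_z=\{y:(z,y)\in B\}$ (see \cite{Kechris95}). A direct complexity estimate for the set in the statement does not work: unravelling ``non-meager in $F$'' by hand requires a real quantifier over candidate sequences of nowhere dense closed sets together with an emptiness condition, giving only a $\mathbf{\Sigma}^1_2$ bound. The genuine obstruction to invoking Montgomery--Novikov is that the generic point $x$ ranges over $F$ itself, which varies with $F$; so the first task is to display all the spaces $F$ simultaneously as images of one fixed Polish space, Borel-uniformly in $F$.

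The central lemma I would prove is that \emph{there is a Borel map $F\mapsto\varphi_F$ on $F(X)\setminus\{\emptyset\}$ assigning to each nonempty closed $F\subseteq X$ a continuous open surjection $\varphi_F:\mathbb{N}^{\mathbb{N}}\to F$, in the sense that $(F,z)\mapsto\varphi_F(z)$ is Borel on $(F(X)\setminus\{\emptyset\})\times\mathbb{N}^{\mathbb{N}}$.} This is a Borel-parametrized refinement of the classical fact that every nonempty Polish space is a continuous open image of $\mathbb{N}^{\mathbb{N}}$; it would be obtained by building, uniformly in $F$, a Lusin scheme $(V_s\cap F)_{s\in\mathbb{N}^{<\mathbb{N}}}$ of nonempty relatively open subsets of $F$ with vanishing diameters and $\bigcup_n(V_{s^\frown n}\cap F)=V_s\cap F$, choosing the open sets $V_s$ from a fixed countable basis of $X$ with the help of the Borel selectors of \cite[Theorem 12.13]{Kechris95}, and then letting $\varphi_F(z)$ be the unique point of $\bigcap_k\overline{V_{z\upharpoonright k}\cap F}$. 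I expect this lemma to be the main technical obstacle: the scheme must be pruned and reindexed so that $\varphi_F$ is total on $\mathbb{N}^{\mathbb{N}}$, and the open-mapping property has to be maintained, all while keeping the construction Borel in $F$.

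Granting the lemma, the rest is short. Since ``$\exists^* x\in F\ x\in A$'' fails when $F=\emptyset$ and $\{\emptyset\}$ is Borel in $F(X)$, it is enough to work over $F(X)\setminus\{\emptyset\}$. There, for every nonempty closed $F$ I claim
\[
\exists^* x\in F\ x\in A\quad\Longleftrightarrow\quad\varphi_F^{-1}(A)\text{ is non-meager in }\mathbb{N}^{\mathbb{N}}.
\]
Indeed, $A\cap F$ is Borel, hence has the Baire property in $F$, so $A\cap F=V\mathbin{\triangle}M$ with $V$ open in $F$ and $M$ meager in $F$; then $A\cap F$ is non-meager in $F$ iff $V\neq\emptyset$. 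Since $\varphi_F$ is continuous and open, the preimage of a closed nowhere dense subset of $F$ is closed nowhere dense, so $\varphi_F^{-1}(A)=\varphi_F^{-1}(V)\mathbin{\triangle}\varphi_F^{-1}(M)$ differs from the open set $\varphi_F^{-1}(V)$ by a meager set; as $\mathbb{N}^{\mathbb{N}}$ is a Baire space, $\varphi_F^{-1}(A)$ is non-meager iff $\varphi_F^{-1}(V)\neq\emptyset$, and by surjectivity of $\varphi_F$ this holds iff $V\neq\emptyset$. Now put $B=\{(F,z)\in(F(X)\setminus\{\emptyset\})\times\mathbb{N}^{\mathbb{N}}:\varphi_F(z)\in A\}$; this is Borel since $(F,z)\mapsto\varphi_F(z)$ and $A$ are Borel, and $B_F=\varphi_F^{-1}(A)$. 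By the Montgomery--Novikov theorem $\{F:B_F\text{ is non-meager in }\mathbb{N}^{\mathbb{N}}\}$ is Borel, and by the displayed equivalence this set is exactly $\{F\in F(X)\setminus\{\emptyset\}:\exists^* x\in F\ x\in A\}$, which together with the Borelness of $\{\emptyset\}$ completes the proof.
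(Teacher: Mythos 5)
The paper does not actually prove this statement: it is quoted verbatim from \cite{SabokTsankov15} and used as a black box to see that $\spaceoftoeplitzsubshift$ is standard Borel, so there is no internal proof to compare yours against. Judged on its own terms, your argument is a correct and reasonable self-contained route: reducing the category quantifier over the \emph{varying} spaces $F$ to the classical Montgomery--Novikov theorem over the \emph{fixed} space $\mathbb{N}^{\mathbb{N}}$ via a Borel-uniform family of continuous open surjections $\varphi_F:\mathbb{N}^{\mathbb{N}}\to F$ is sound, and the transfer step (``$A\cap F$ non-meager in $F$ iff $\varphi_F^{-1}(A)$ non-meager'') is argued correctly, as is the trivial treatment of $F=\emptyset$.

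Two remarks on the key lemma, which you rightly identify as the technical crux. First, the conditions you list for the scheme (nonempty relatively open pieces, $V_{s^\frown n}\subseteq V_s$, vanishing diameters, exact unions $\bigcup_n(V_{s^\frown n}\cap F)=V_s\cap F$) do not by themselves give openness: they only guarantee $\varphi_F(z)\in\overline{V_{z\upharpoonright k}\cap F}$, whereas the standard openness argument ($\varphi_F[N_s]=V_s\cap F$ for basic clopen $N_s$) needs $\varphi_F(z)\in V_{z\upharpoonright k}\cap F$ for every $k$. The usual fix is to demand in addition $\overline{V_{s^\frown n}}\subseteq V_s$; this is harmless, since every $x\in V_s\cap F$ has a basic neighborhood $W$ of small diameter with $\overline{W}\subseteq V_s$ and $W\cap F\neq\emptyset$, so such $W$ still cover $V_s\cap F$. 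Second, once the scheme is built from a fixed countable basis, the Borel uniformity is essentially automatic and worth saying explicitly: the only dependence on $F$ is through conditions of the form ``$V_j\cap F\neq\emptyset$'', which are exactly the generators of the Effros Borel structure, so the index map $(F,s)\mapsto$ (basic set chosen at node $s$) is Borel by induction on $|s|$, surjectivity of the child-indexing onto the (nonempty, possibly finite) admissible set is arranged by repetition, and then $\varphi_F(z)\in U$ iff some chosen $V_{z\upharpoonright k}$ is contained in $U$, giving joint Borelness of $(F,z)\mapsto\varphi_F(z)$. With these details filled in, your proof is complete; it is also in the same spirit as the standard arguments for this fact, which combine the Kuratowski--Ryll-Nardzewski-type selectors (as in \cite[Theorem 12.13]{Kechris95}) or a uniform parametrization of the spaces $F$ with the classical Borelness of the category quantifier.
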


Since the set of Toeplitz sequences in a Toeplitz subshift form a dense $G_{\delta}$ subset \cite[Theorem 5.1]{Downarowicz05} and the set of Toeplitz sequences is a Borel subset of $\shiftspace$, it follows from Theorem \ref{theorem-nonmeagerlymany} that the set
\[ \spaceoftoeplitzsubshift := \{O \in \mathcal{M}_{\alphabet}: O \text{ is a Toeplitz subshift}\} \]
is a Borel subset of $\mathcal{M}_{\alphabet}$ and hence is a standard Borel space.

We will next construct the standard Borel spaces of Toeplitz subshifts with growing blocks and separated holes. However, since having growing blocks is not independent of the factorization we use for each supernatural number, in order to construct the standard Borel space of Toeplitz subshifts with growing blocks, we need to fix a map that assigns a factorization to each supernatural number. Moreover, we want to express the property of having growing blocks with a Borel condition and hence the factorization map we will use should be Borel when considered as a function from $(\mathbb{N} \cup \{\infty\})^{\mathbb{N}}$ to $(\natnump)^{\mathbb{N}}$. Given a supernatural number $\mathbf{r}=\prod_{i \in \natnump} \textrm{p}_i^{k_i}$, let
\[\dot{r}_t=\prod_{1\leq i\leq t+1} \textrm{p}_i^{min\{k_i,t+1\}}\]
and define the natural factorization $(r_t)_{t \in \mathbb{N}}$ of $\textbf{r}$ to be the sequence obtained from the sequence $(\dot{r}_t)_{t \in \mathbb{N}}$ by deleting all $1$'s and the repeated terms. We note that all results in this paper hold for any Borel factorization of supernatural numbers.

Now fix a Borel map that chooses a point from each element of $\spaceoftoeplitzsubshift$. Since all points in Toeplitz subshifts have the same essential periods, we can construct a Borel map from $\tau: \spaceoftoeplitzsubshift \rightarrow (\natnump)^{\mathbb{N}}$ that sends each Toeplitz subshift to the natural factorization of its scale. By Lemma \ref{lemma-williamslemma0} and Lemma \ref{lemma-williamslemma1}.a, the $p$-skeleton structures of all points in a Toeplitz subshift are the same, up to shifting. Moreover, both having separated holes and growing blocks with respect to the natural factorization can be expressed by Borel conditions. Thus both
\[ \spaceoftoeplitzsubshiftsep := \{O \in \spaceoftoeplitzsubshift: O \text{ has separated holes}\} \]
and
\[ \spaceoftoeplitzsubshiftgr := \{O \in \spaceoftoeplitzsubshift: O \text{ has growing blocks with respect to } \tau(O)\} \]
are Borel subsets of $\spaceoftoeplitzsubshift$ and hence are standard Borel spaces. The topological conjugacy relations on the standard Borel spaces $\spaceoftoeplitzsubshift$, $\spaceoftoeplitzsubshiftsep$, and $\spaceoftoeplitzsubshiftgr$ are clearly countable Borel equivalence relations. Moreover, it follows from the work of Thomas \cite{Thomas13} that $E_0$ is Borel reducible to the topological conjugacy relation on $\spaceoftoeplitzsubshiftsep$.

\section{Topological conjugacy of Toeplitz subshifts}

Downarowicz, Kwiatkowski, and Lacroix found a criterion for Toeplitz subshifts to be topologically conjugate in \cite{DownarowiczKwiatkowskiLacroix95}. In the proof of Theorem \ref{theorem-mainresulttoeplitz}, we will need this criterion in a slightly more general form than it was originally formulated. In this section, we will include these more general statements with their proofs. We note that all results in this section are extracted from \cite[Theorem 1]{DownarowiczKwiatkowskiLacroix95}.

\begin{lemma}\label{lemma-obscurelemma} Let $O$ and $O'$ be Toeplitz subshifts over the alphabet $\alphabet$ and let $\pi: O \rightarrow O'$ be a topological conjugacy such that $\pi(\alpha)=\beta$ for $\alpha \in O$ and $\beta \in O$. Then for any $p \in \natnump$ such that $[-|\pi|,|\pi|] \subseteq Per_{p}(\alpha),Per_{p}(\beta)$ there exists $\phi \in Sym(\alphabet^p)$ such that
\[ \phi(\alpha[kp,(k+1)p))=\beta[kp,(k+1)p)\]
for all $k \in \mathbb{Z}$.
\end{lemma}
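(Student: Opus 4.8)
The plan is to use the Curtis--Hedlund--Lyndon theorem to extract block codes $C$ inducing $\pi$ and $C'$ inducing $\pi^{-1}$, both of length at most $|\pi|$, and then to show that on $p$-periodic coordinates the conjugacy acts by a fixed permutation of the $p$-blocks. First I would fix $p \in \natnump$ with $[-|\pi|,|\pi|] \subseteq Per_p(\alpha) \cap Per_p(\beta)$. The key observation is that since $\alpha(i+pk)=\alpha(i)$ for all $i \in [-|\pi|,|\pi|]$ and $k \in \intg$, and likewise for $\beta$, the values of $\alpha$ and $\beta$ on any window of radius $|\pi|$ centered at an index congruent to a fixed residue mod $p$ are completely determined by that residue --- so the block code $C$, applied at index $j$, produces an output depending only on $j \bmod p$. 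Hence $\beta = \pi(\alpha)$ is itself $p$-periodic: $\beta(j+pk)=\beta(j)$ for all $j,k$. (Of course we already know $[-|\pi|,|\pi|] \subseteq Per_p(\beta)$, but the point is that this forces the relationship between the blocks of $\alpha$ and $\beta$ to be coordinate-wise periodic.)

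Next I would define, for each word $w \in \alphabet^p$ that actually occurs as some $\alpha[kp,(k+1)p)$, the value $\phi(w) := \beta[kp,(k+1)p)$, and argue this is well-defined: if $\alpha[kp,(k+1)p) = \alpha[lp,(l+1)p) = w$, then by $p$-periodicity of $\alpha$ on $[-|\pi|,|\pi|]$ together with the fact that $\alpha$ agrees with $\seqshift^{(l-k)p}(\alpha)$ on the block at position $k$ and on the surrounding periodic coordinates, the block code $C$ produces the same output on the corresponding windows, so $\beta[kp,(k+1)p) = \beta[lp,(l+1)p)$. Symmetrically, using the block code $C'$ for $\pi^{-1}$ and the hypothesis $[-|\pi|,|\pi|]\subseteq Per_p(\beta)$, one shows that $w \mapsto \phi(w)$ is injective on its domain: distinct occurring blocks of $\alpha$ map to distinct blocks of $\beta$, because $\pi^{-1}$ recovers $\alpha[kp,(k+1)p)$ from $\beta[kp,(k+1)p)$ and the periodic surroundings. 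Since the set of occurring $p$-blocks of $\alpha$ and of $\beta$ have the same finite cardinality (by this bijection), $\phi$ extends to a genuine element of $Sym(\alphabet^p)$ by choosing any bijection between the complementary sets of non-occurring blocks. By construction $\phi(\alpha[kp,(k+1)p)) = \beta[kp,(k+1)p)$ for all $k$.

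The main obstacle I expect is the bookkeeping needed to make precise the claim that the block code output at a given coordinate depends only on that coordinate's residue mod $p$ and on which $p$-block of $\alpha$ the coordinate lies in --- one has to be careful that the window $[j-|\pi|,j+|\pi|]$ seen by the block code at index $j \in [kp,(k+1)p)$ overlaps only the block $\alpha[kp,(k+1)p)$ and coordinates in $Per_p(\alpha)$, which is exactly what the hypothesis $[-|\pi|,|\pi|]\subseteq Per_p(\alpha)$ guarantees after translating by $-kp$ (using that $Per_p(\alpha)$ is a union of full residue classes mod $p$, hence translation-invariant under $p\intg$, and shift-covariant). Once this locality statement is nailed down, both the well-definedness and the injectivity of $\phi$ follow by symmetric applications of the argument to $\pi$ and $\pi^{-1}$, and the extension to a permutation is immediate from counting. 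I would also note that the argument does not actually use minimality of $O$ or $O'$ beyond what is packaged into the Curtis--Hedlund--Lyndon theorem and the definition of $|\pi|$; the periodicity hypotheses on $\alpha$ and $\beta$ do all the work.
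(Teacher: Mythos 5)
Your core argument is exactly the paper's: define the assignment $\alpha[kp,(k+1)p)\mapsto\beta[kp,(k+1)p)$ on the occurring $p$-blocks, prove it is well-defined by noting that equality of two $p$-blocks of $\alpha$ forces equality of the windows enlarged by $|\pi|$ on each side (because the margin coordinates lie in residue classes contained in $Per_p(\alpha)$) and then applying a block code of length at most $|\pi|$ inducing $\pi$; prove injectivity symmetrically with a block code for $\pi^{-1}$ and the hypothesis on $Per_p(\beta)$; and extend the resulting bijection between occurring blocks to some $\phi\in Sym(\alphabet^p)$. That is precisely the published proof, and those steps are correct as you present them in your second and third paragraphs.

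One sentence in your first paragraph is false and should be deleted: the claim that the block code output at index $j$ depends only on $j \bmod p$, and hence that $\beta(j+pk)=\beta(j)$ for all $j,k\in\intg$. That would mean $Per_p(\beta)=\mathbb{Z}$, i.e.\ $\beta$ is a periodic sequence, which is excluded for the (non-periodic) Toeplitz sequences considered here; the hypothesis only gives periodicity on the residue classes meeting $[-|\pi|,|\pi|]$, and when $p>2|\pi|+1$ these do not exhaust all residues. The correct locality statement is the one you yourself formulate later: the output of the block code at $j$ is determined by the residue of $j$ modulo $p$ \emph{together with} the content of the $p$-block $\alpha[kp,(k+1)p)$ containing $j$, since the window $[j-|\pi|,j+|\pi|]$ meets only that block and coordinates in $Per_p(\alpha)$. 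Since your well-definedness and injectivity arguments use only this weaker (and true) statement, the proof stands once the erroneous ``key observation'' is removed.
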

\begin{proof} Let $p \in \natnump$ be such that $[-|\pi|,|\pi|] \subseteq Per_{p}(\alpha), Per_{p}(\beta)$. Consider the relation $\Gamma: W_p(\alpha) \rightarrow W_p(\beta)$ given by
\[ \Gamma(\alpha[kp,(k+1)p))=\beta[kp,(k+1)p)\]
for each $k \in \mathbb{Z}$. We want to prove that $\Gamma$ is well-defined and one to one. Pick $k,k' \in \mathbb{Z}$ such that $\alpha[kp,(k+1)p)=\alpha[k'p,(k'+1)p)$. Since $[-|\pi|,|\pi|] \subseteq Per_{p}(\alpha), Per_{p}(\beta)$ we have that
\[\alpha[kp-|\pi|,(k+1)p+|\pi|]=\alpha[k'p-|\pi|,(k'+1)p+|\pi|]\]
By the definition of $|\pi|$, there exists some block code $C$ inducing $\pi$ such that $|C| \leq |\pi|$. Then we have that
\begin{align*}
\beta(kp+u) &= (\pi(\alpha))(kp+u)\\
&= C(\alpha[kp+u-|C|,kp+u+|C|])\\
&= C(\alpha[k'p+u-|C|,k'p+u+|C|])\\
&= (\pi(\alpha))(k'p+u)\\
&= \beta(k'p+u)
\end{align*}
for any $0 \leq u < p$ and hence $\beta[kp,(k+1)p)=\beta[k'p,(k'+1)p)$. This proves that $\Gamma$ is well-defined. Since there exists a block code $C'$ inducing $\pi^{-1}$ such that $|C'| \leq |\pi|$, a symmetrical argument shows that $\Gamma$ is one to one. It follows that $\Gamma$ is a bijection and hence we can choose $\phi \in Sym(\alphabet^p)$ to be any permutation extending $\Gamma$.
\end{proof}

\begin{lemma}\label{lemma-notobscurelemma} Let $O$ and $O'$ be Toeplitz subshifts with the same scale $\textbf{r}$. Assume that there exist a factor $p$ of $\textbf{r}$ and $\phi \in Sym(\alphabet^p)$ such that
\[ \phi(\alpha[kp,(k+1)p))=\beta[kp,(k+1)p) \text{ for all } k \in \mathbb{Z}\]
for some points $\alpha \in O$ and $\beta \in O'$. Then $(O,\seqshift,\alpha)$ and $(O',\seqshift,\beta)$ are pointed topologically conjugate.
\end{lemma}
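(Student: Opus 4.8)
The plan is to build an explicit topological conjugacy $\pi : O \to O'$ with $\pi(\alpha) = \beta$ by recoding points block-by-block via $\phi$, where the length-$p$ block decomposition of an arbitrary $x \in O$ is dictated by the position of $x$ inside the orbit closure of $\alpha$.

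To set up the decomposition, note that since $p$ is a factor of $\mathbf{r}$, any factorization $(u_i)_{i \in \mathbb{N}}$ of $\mathbf{r}$ satisfies $p \mid u_i$ for all sufficiently large $i$. Fixing such an $i$ and grouping the partition $\{\overline{A(\alpha,u_i,l)} : 0 \le l < u_i\}$ of $O$ from Lemma \ref{lemma-williamslemma1}.a according to the residue of $l$ modulo $p$ yields a clopen partition of $O$ into the pieces
\[ \overline{A(\alpha,p,k)} = \bigcup_{\substack{0 \le l < u_i \\ p \,\mid\, l-k}} \overline{A(\alpha,u_i,l)}, \qquad 0 \le k < p, \]
on which $\seqshift$ acts by the cyclic permutation $k \mapsto k+1 \bmod p$ (by Lemma \ref{lemma-williamslemma1}.c) and with $\alpha \in \overline{A(\alpha,p,0)}$. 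The same construction partitions $O'$ into clopen pieces $\overline{A(\beta,p,k)}$ with $\beta \in \overline{A(\beta,p,0)}$.

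Now I define $\pi$ as follows: for $x \in \overline{A(\alpha,p,k)}$, cut $x$ into consecutive length-$p$ blocks at the positions congruent to $-k$ modulo $p$ and apply $\phi$ to each block; concretely, given $j \in \mathbb{Z}$, let $m = \lfloor (j+k)/p \rfloor$ and set $(\pi(x))(j)$ to be the $(j-mp+k)$-th entry of $\phi\bigl(x[mp-k,(m+1)p-k)\bigr)$. On each clopen piece this is a sliding-block recoding with a fixed offset, so $\pi : O \to \shiftspace$ is continuous. I would then verify that the block of $\seqshift(x)$ active at a given position agrees with the corresponding block of $x$ — handling the cases $k < p-1$ and the wraparound $k = p-1$ separately — which gives $\pi \circ \seqshift = \seqshift \circ \pi$; and that for $x = \alpha$, where $k = 0$, the hypothesis $\phi(\alpha[mp,(m+1)p)) = \beta[mp,(m+1)p)$ immediately yields $\pi(\alpha) = \beta$. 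Since $\pi$ is continuous and $\seqshift$-equivariant and $O = \shiftorbit(\alpha)$ by minimality, it follows that $\pi(O) \subseteq \shiftorbit(\beta) \subseteq O'$.

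To finish, I would run the same construction with $\phi^{-1} \in Sym(\alphabet^p)$, which satisfies $\phi^{-1}(\beta[kp,(k+1)p)) = \alpha[kp,(k+1)p)$, obtaining a continuous $\seqshift$-equivariant map $\pi' : O' \to O$ with $\pi'(\beta) = \alpha$. Then $\pi' \circ \pi : O \to O$ is continuous, $\seqshift$-equivariant and fixes $\alpha$, hence fixes the dense orbit of $\alpha$ pointwise and therefore equals $\mathrm{id}_O$; symmetrically $\pi \circ \pi' = \mathrm{id}_{O'}$. Thus $\pi$ is a homeomorphism, hence a topological conjugacy with $\pi(\alpha)=\beta$, i.e. $(O,\seqshift,\alpha)$ and $(O',\seqshift,\beta)$ are pointed topologically conjugate. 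The one genuinely delicate point is the offset bookkeeping: the cut positions must be chosen ($\equiv -k \pmod p$ on the piece $\overline{A(\alpha,p,k)}$) so that the decomposition is simultaneously compatible with $\seqshift$, including the cyclic wraparound on the last piece, and reduces to the prescribed alignment at $\alpha$; everything else is a routine continuity-and-minimality argument.
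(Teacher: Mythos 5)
Your construction coincides with the paper's proof written out in coordinates: the same clopen partition of $O$ into the pieces $\overline{A(\alpha,p,k)}$, the same map $\seqshift^{k}\circ\widehat{\phi}\circ\seqshift^{-k}$ on the $k$-th piece (so $\pi(\alpha)=\beta$ for $k=0$), and the same two-case verification of $\seqshift$-equivariance with the wraparound at $k=p-1$ handled via $\widehat{\phi}\circ\seqshift^{p}=\seqshift^{p}\circ\widehat{\phi}$. The only cosmetic difference is that you get invertibility by building the analogous map from $\phi^{-1}$ and using minimality to see the composites are identities, whereas the paper observes directly that $\widehat{\phi}$ is a homeomorphism with $(\widehat{\phi})^{-1}=\widehat{\phi^{-1}}$ carrying the pieces of $O$ onto those of $O'$; both are correct.
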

\begin{proof} Observe that $\phi$ induces a homeomorphism $\widehat{\phi}$ of $\shiftspace$ defined by
\[\widehat{\phi}(\gamma)[kp,(k+1)p)=\phi(\gamma[kp,(k+1)p))\]
for all $k \in \mathbb{Z}$ and $\gamma \in \shiftspace$. Obviously
\[\widehat{\phi}(\seqshift^{pk}(\alpha))=\seqshift^{pk}(\beta)\]
for any $k \in \mathbb{Z}$. Let $A(\alpha,p,0)$ and $A(\beta,p,0)$ be defined as in Lemma \ref{lemma-williamslemma0}. Since $\widehat{\phi}$ is a homeomorphism and $(\widehat{\phi})^{-1}=\widehat{\phi^{-1}}$, it easily follows that
\[\widehat{\phi}[\overline{A(\alpha,p,0)}]=\overline{A(\beta,p,0)}\]
Recall that $\{\overline{A(\alpha,p,k)}: 0 \leq k < p\}$ and $\{\overline{A(\beta,p,k)}: 0 \leq k < p\}$ are partitions of $O$ and $O'$ respectively. Let $\pi$ be the map from $O$ to $O'$ given by
\[ \pi(\gamma)=\seqshift^{i}(\widehat{\phi}(\seqshift^{-i}(\gamma))) \text{ if } \gamma \in \overline{A(\alpha,p,i)} \]
Obviously $\pi$ is a bijection between $O$ and $O'$. Moreover, it is continuous on each $\overline{A(\alpha,p,i)}$. Since the sets $\overline{A(\alpha,p,i)}$ are at a positive distant apart from each other, it follows that $\pi$ is continuous on $O$ and hence is a homeomorphism between $O$ and $O'$. We want to show that $\pi$ is shift preserving. For any $0 \leq i < p-2$ and for any $\gamma \in \overline{A(\alpha,p,i)}$, we have that
\[ \pi(\seqshift(\gamma))=\seqshift^{i+1}(\widehat{\phi}(\seqshift^{-(i+1)}(\seqshift(\gamma))))=\seqshift(\seqshift^{i}(\widehat{\phi}(\seqshift^{-i}(\gamma))))=\seqshift(\pi(\gamma)) \]
Since $\widehat{\phi}$ commutes with $\seqshift^{p}$, for any $\gamma \in \overline{A(\alpha,p,p-1)}$ we have that
\begin{align*}
\seqshift(\pi(\gamma))&=\seqshift(\seqshift^{(p-1)}(\widehat{\phi}(\seqshift^{-(p-1)}(\gamma))))\\
&=\seqshift^p(\widehat{\phi}(\seqshift^{-(p-1)}(\gamma)))\\
&= \widehat{\phi}(\seqshift^p(\seqshift^{-(p-1)}(\gamma)))\\
&=\widehat{\phi}(\seqshift(\gamma))=\pi(\seqshift(\gamma))
\end{align*}
Therefore, $\pi$ is a topological conjugacy between $O$ and $O'$ sending $\alpha$ to $\beta$.
\end{proof}

We remark that the proofs of Lemma \ref{lemma-obscurelemma} and Lemma \ref{lemma-notobscurelemma} together imply that if $O$ and $O'$ are topologically conjugate Toeplitz subshifts, then some elements of the partition $Parts(O,p)$ are mapped onto some elements of the partition $Parts(O',p)$ under the natural action of $Sym(n^p)$ for a sufficiently large factor $p$ of the common scale.

\section{Restricting the Friedman-Stanley jump to finite subsets}

Recall that the set $K(X)$ of non-empty compact subsets of a Polish space $X$ is a Polish space endowed with the topology induced by the Hausdorff metric. It is easily checked that the set
\[\Fin(X):=\{F \subseteq X: F \text{ is finite and non-empty}\}\]
is an $F_{\sigma}$ subset of $K(X)$ and hence is a standard Borel space. Given a Borel equivalence relation $E$ on a standard Borel space $X$, let $E^{\fin}$ be the equivalence relation on $\Fin(X)$ defined by
\[ u\ E^{\fin}\ v \Leftrightarrow \{[x]_E: x \in u\}=\{[x]_E: x \in v\}\]
It is routine to check that $E^{\fin}$ is a Borel equivalence relation. Even though $E^{\fin}$ is not a subrelation of the Friedman-Stanley jump $E^+$, we can think of $E^{\fin}$ as the restriction of $E^+$ to the finite subsets of $X$. (It is not difficult to show that $E^{\fin}$ is Borel bireducible with the restriction of $E^+$ to the Borel subset of $X^{\mathbb{N}}$ consisting of sequences in which only finitely many elements of $X$ appear.)

We will now explore some basic properties of the map $E \mapsto E^{\fin}$. We begin by noting that the Borel map $x \mapsto \{x\}$ is a Borel reduction from $E$ to $E^{\fin}$ for every Borel equivalence relation $E$ and that if $f: X \rightarrow Y$ is a Borel reduction witnessing $E\leq_B F$, then $u \mapsto f[u]$ is a Borel reduction from $E^{\fin}$ to $F^{\fin}$. It is easily checked that if $E$ is a finite (respectively, countable) Borel equivalence relation, then $E^{\fin}$ is also a finite (respectively, countable) Borel equivalence relation; and that $E^{\fin}$ is smooth whenever $E$ is smooth. Moreover, the map $E \mapsto E^{\fin}$ commutes with increasing unions, i.e. if $E_0 \subseteq E_1 \subseteq \dots $ is an increasing sequence of Borel equivalence relations on a standard Borel space $X$, then $E_0^{\fin} \subseteq E_1^{\fin} \subseteq \dots $ is an increasing sequence of Borel equivalence relations on $\Fin(X)$ and
\[\bigcup_{i \in \mathbb{N}} E_i^{\fin}=E^{\fin} \text{ where }E=\bigcup_{i \in \mathbb{N}} E_i\]
Consequently, if $E$ is a hyperfinite (respectively, hypersmooth) Borel equivalence relation, then $E^{\fin}$ is also hyperfinite (respectively, hypersmooth).

It is well-known \cite{Silver80, KechrisLouveau97, HarringtonKechrisLouveau90} that there are no $\leq_B$-intermediate Borel equivalence relations between the consecutive pairs of the sequence of Borel equivalence relations
\[\Delta_{\mathbb{N}} <_B \Delta_{\mathbb{R}} <_B E_0 <_B E_1 \]
Moreover, the Borel equivalence relations $\Delta_{\mathbb{N}}$, $\Delta_{\mathbb{R}}$, $E_0$, $E_1$, and $E_{\infty}$ are fixed points of the map $E \mapsto E^{\fin}$ up to Borel bireducibility. Based on this observation, one might conjecture that $E \sim_B E^{\fin}$ for all Borel equivalence relations $E$ with infinitely many $E$-classes. However, this naive conjecture turns out to be false. As we shall see, $E^{\fin}$ behaves like a universal finite index extension of $E$ for every countable Borel equivalence relation $E$ and not every countable Borel equivalence relation is Borel bireducible with all of its finite index extensions. We first need to recall some basic definitions.

Let $E \subseteq F$ be countable Borel equivalence relations on a standard Borel space $X$. Then $F$ is called a \textit{finite index extension} of $E$ if every $F$-class consists of finitely many $E$-classes. We will write $[F:E]<\infty$ to denote that $F$ is a finite index extension of $E$.

\begin{proposition} \label{theorem-universalfiniteindex} Let $E$ be a countable Borel equivalence relation on a standard Borel space $X$. Then for every countable Borel equivalence relation $F$ on $X$ with $[F:E]<\infty$, we have that $F \leq_B E^{\fin}$.
\end{proposition}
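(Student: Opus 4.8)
The plan is to define a Borel map $\Phi\colon X\to\Fin(X)$ by sending each $x$ to (a Borel selection of) the finite set of $E$-classes making up its $F$-class, and to show this is a Borel reduction from $F$ to $E^{\fin}$. First I would invoke the Lusin--Novikov uniformization theorem to obtain countably many Borel partial functions $g_n\colon X\to X$ whose graphs cover $F$, so that $[x]_F=\{g_n(x):n\in\natnum,\ x\in\mathrm{dom}(g_n)\}$; this realizes $F$ as the orbit equivalence relation of the countable Borel equivalence relation it already is, in a usable form. The subtlety is that $\Phi(x)$ must be a \emph{finite} set and must depend only on $[x]_F$ in the appropriate sense: if I naively set $\Phi(x)=\{g_n(x):x\in\mathrm{dom}(g_n)\}$, this set is finite (since $[x]_F$ is covered by finitely many $E$-classes, but the \emph{set of points} $[x]_F$ itself may be infinite), so that does not immediately work.

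To fix this, I would use that $[F:E]<\infty$ together with a Borel selector for $E$. Since $E$ is a countable Borel equivalence relation, by the Feldman--Moore theorem it admits a Borel transversal-like structure; more precisely, one can fix a Borel map $s_E\colon X\to X$ with $s_E(x)\mathbin{E}x$ and $s_E(x)=s_E(y)$ whenever $x\mathbin{E}y$ — wait, a full Borel selector need not exist, but a \emph{countable} Borel colouring or, better, the following does: there is a sequence of Borel sets witnessing that $E$ has a Borel transversal on each $F$-class after refining. The clean route is: since each $F$-class is a union of finitely many $E$-classes, the quotient map $X\to X/E$ pushes $F$ down to a \emph{finite} Borel equivalence relation $\bar F$ on the (not standard Borel, but handled via Borel selectors) quotient. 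Concretely, I would instead define $\Phi(x)$ to be the finite set $\{\,\pi_E(g_n(x)) : x\in\mathrm{dom}(g_n)\,\}$ where $\pi_E$ picks, in a Borel way, one representative from each $E$-class \emph{that appears} — and here is where I use finite index: among $\{g_n(x)\}$ only finitely many $E$-classes occur, so I can enumerate them Borel-measurably using a fixed sequence of Borel functions selecting points from $E$-classes, keeping the $E$-least representative under some fixed Borel linear order on $X$ restricted to the finitely many classes present. The key technical input is the Lusin--Novikov/Luzin uniformization and a Borel linearization of $X$; the finiteness of the index guarantees the resulting set is a genuine element of $\Fin(X)$.

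Granting the construction of such a Borel $\Phi$, verifying the reduction property is routine: $x\mathbin{F}y$ iff $[x]_F=[y]_F$ iff the set of $E$-classes covering $[x]_F$ equals that covering $[y]_F$ iff $\{[z]_E:z\in\Phi(x)\}=\{[z]_E:z\in\Phi(y)\}$ iff $\Phi(x)\mathbin{E^{\fin}}\Phi(y)$, where the second-to-last equivalence holds precisely because $\Phi(x)$ was designed to contain exactly one representative from each $E$-class meeting $[x]_F$. Borelness of $\Phi$ follows from Borelness of the $g_n$, of $\pi_E$, and of the finite enumeration procedure.

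The main obstacle I anticipate is purely bookkeeping: making the ``choose one $E$-representative from each of the finitely many $E$-classes present among $\{g_n(x)\}$'' step uniformly Borel. The honest tool is that $E$, being countable Borel, has a countable separating family of Borel sets (equivalently, the $E$-saturation of Borel sets is Borel for a countable generating family), so ``$g_m(x)\mathbin{E}g_n(x)$'' is a Borel condition in $x$; using this one can Borel-measurably pick, for each $x$, the lexicographically-first pattern of indices that lists each occurring $E$-class exactly once, and output the corresponding finite set of points. Once this uniformization is in hand, everything else is immediate, so I would devote most of the write-up to that point and treat the reduction verification as a one-line check.
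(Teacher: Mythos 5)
Your proposal is correct and is essentially the paper's own argument: the paper applies Feldman--Moore to realize $F$ as the orbit relation of a Borel action of a countable group $(g_i)_{i\in\mathbb{N}}$ and sends $x$ to $\{g_i\cdot x : i\le j_x\}$, where $j_x$ is least so that every $E$-class inside $[x]_F$ is already represented among the first $j_x+1$ translates --- the same idea as your Lusin--Novikov functions with least-index selection, and with the same one-line verification that the image determines exactly the set of $E$-classes covering $[x]_F$. The only remark worth adding is that since $E^{\fin}$ compares only sets of $E$-classes, duplicate representatives are harmless, so your ``exactly one representative per class'' bookkeeping --- and in particular the aside about taking $E$-least representatives under a Borel linear order, which need not exist for infinite classes --- can be dropped; your final least-index mechanism (or the paper's initial-segment truncation at $j_x$) already suffices and is Borel for exactly the reasons you give.
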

\begin{proof} By the Feldman-Moore theorem \cite[Theorem 7.1.4]{Gao09}, there exists a countable discrete group $G$ such that $F$ is the orbit equivalence relation of a Borel action of $G$ on $X$. Let $(g_i)_{i \in \mathbb{N}}$ be a fixed enumeration of elements of $G$ and consider the map $h: X \rightarrow \Fin(X)$ defined by
\[ x \mapsto \{g_i \cdot x: i \leq j_x\} \]
where $j_x$ is the least natural number such that
\[\forall k\ \exists i \leq j_x\ (g_k \cdot x, g_i \cdot x) \in E\]
It is easily checked that $h$ is a Borel map. Notice that $h$ maps every $x$ to a finite subset of $X$ that contains representatives of each $E$-class contained in $[x]_F$. Hence $x F y \Leftrightarrow h(x) E^{\fin} h(y)$ for all $x,y \in X$.
\end{proof}

It follows that if $E \subseteq F$ is a pair of countable Borel equivalence relations such that $[F:E] < \infty$ and $F \nleq_B E$, then $E^{\fin} \nleq_B E$ and hence $E <_B E^{\fin}$. It is well-known that such pairs of countable Borel equivalence relations exist \cite{Adams02}. One may ask whether or not the only obstacle for a countable Borel equivalence relation $E$ to satisfy $E \sim_B E^{\fin}$ is the existence of such a finite index extension.

\begin{question} Let $E$ be a countable Borel equivalence relation on a standard Borel space $X$ such that $E^{\fin} \nleq_B E$. Does there necessarily exist a countable Borel equivalence relation $F$ such that $[F:E] < \infty$ and $F \nleq_B E$?
\end{question}

\section{Proof of the main result}

In this section, we shall prove Theorem \ref{theorem-mainresulttoeplitz}. Indeed, we will prove the stronger result that the topological conjugacy relation on the standard Borel space $\spaceoftoeplitzsubshiftgr$ of Toeplitz subshifts with growing blocks is hyperfinite.

Before we present the proof of the main result of this chapter, we will prove the following easy but useful proposition which shows that if there exists a $p$-hole in the $p$-skeleton of a sequence $\alpha$, then there exists a $p$-hole in the $p$-skeleton of its image under a block code, which is no further from the $p$-hole in the sequence $\alpha$ than the length of the block code.

\begin{proposition}\label{proposition-phole} Let $(O,\seqshift,\alpha)$ and $(O',\seqshift,\beta)$ be pointed Toeplitz subshifts and let $\pi$ be a factor map from $O$ onto $O'$ such that $\pi(\alpha)=\beta$. Assume that $m \in \natnump$ is the length of some block code $C$ inducing $\pi$. Then for all $p \in \natnump$ and $k \in \mathbb{Z}$, we have that $k \in Per_p(\beta)$ whenever $[k-m,k+m] \subseteq Per_p(\alpha)$.
\end{proposition}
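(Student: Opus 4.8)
The plan is to unwind the definition of $Per_p(\beta)$ directly. Recall that $k \in Per_p(\beta)$ means $\beta(k) = \beta(k+pj)$ for every $j \in \intg$; equivalently, writing $a = \beta(k)$, we need $\beta(k+pj) = a$ for all $j$. Since $\pi$ is induced by the block code $C$ of length $m$, we have $\beta(\ell) = C(\alpha[\ell-m,\ell+m])$ for every $\ell \in \intg$. So it suffices to show that the windows $\alpha[k+pj-m, k+pj+m]$ are the same word for all $j \in \intg$, because then $C$ assigns them all the same value, namely $\beta(k)$.

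First I would fix $j \in \intg$ and compare the window around $k$ with the window around $k+pj$. The hypothesis is $[k-m,k+m] \subseteq Per_p(\alpha)$, i.e. for each index $i$ with $|i-k| \le m$ we have $\alpha(i) = \alpha(i+pj')$ for all $j' \in \intg$; in particular $\alpha(i) = \alpha(i+pj)$. Running $i$ over the interval $[k-m,k+m]$ and noting that $i + pj$ runs over $[k+pj-m, k+pj+m]$, this says precisely that
\[
\alpha[k-m,k+m] = \alpha[k+pj-m,k+pj+m].
\]
Applying $C$ to both sides gives $\beta(k) = \beta(k+pj)$. Since $j$ was arbitrary, $k \in Per_p(\beta)$, which is what we wanted.

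There is essentially no obstacle here; the only thing to be slightly careful about is the direction of the periodicity condition (one uses that membership in $Per_p(\alpha)$ gives equality under shifting by \emph{any} multiple of $p$, not just the single relevant $pj$), and the bookkeeping of indices so that the length-$m$ window around $k+pj$ is exactly the $pj$-translate of the window around $k$. Both are immediate from the definitions. Note also that we do not even need $\alpha$ or $\beta$ to be Toeplitz for this particular statement, nor do we need minimality; the argument is purely about block codes and periodic parts, so it applies to any $\alpha \in \shiftspace$ and its image under $\pi$.
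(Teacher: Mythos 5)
Your proposal is correct and is essentially identical to the paper's proof: both apply the block code $C$ to the windows $\alpha[k+pj-m,k+pj+m]$ and use the hypothesis $[k-m,k+m]\subseteq Per_p(\alpha)$ to see these windows all coincide with $\alpha[k-m,k+m]$, giving $\beta(k+pj)=\beta(k)$ for all $j$. Your closing remark that neither the Toeplitz property nor minimality is needed is also accurate.
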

\begin{proof}
For all $p \in \natnump$ and $k \in \mathbb{Z}$, if $[k-m,k+m] \subseteq Per_p(\alpha)$, then for all $l \in \mathbb{Z}$
\begin{align*}
\beta(k+pl)=(\pi(\alpha))(k+pl)&=C(\alpha[k+pl-m,k+pl+m])\\
&=C(\alpha[k-m,k+m])\\
&=(\pi(\alpha))(k)=\beta(k)
\end{align*}
which implies that $k \in Per_p(\beta)$.
\end{proof}

For each $p \in \natnump$, consider the action of the symmetric group $Sym(\alphabet^p)$ on $K(\shiftspace)$ defined by
\[\phi \cdot K \mapsto \widehat{\phi}[K]\]
where $\widehat{\phi}$ is the homeomorphism of $\shiftspace$ given by
\[\widehat{\phi}(\gamma)[kp,(k+1)p)=\phi(\gamma[kp,(k+1)p)) \text{ for all } k \in \mathbb{Z} \text{ and for all }\gamma \in \shiftspace.\]
It is not difficult to check that this action is Borel; and that the orbit equivalence relation $\mathtt{D}_p$ of this action is a finite Borel equivalence relation. We are now ready to present the main theorem of this section.

\begin{theorem}\label{theorem-e1reduction} The topological conjugacy relation on $\spaceoftoeplitzsubshiftgr$ is Borel reducible to $E_1$.
\end{theorem}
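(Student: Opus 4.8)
The plan is to construct a Borel reduction $O\mapsto(\Theta_i(O))_{i\in\mathbb{N}}$ from topological conjugacy on $\spaceoftoeplitzsubshiftgr$ to eventual equality of sequences, that is, to $E_1$. Writing $\tau(O)=(u_i)_{i\in\mathbb{N}}$ for the natural factorization of the scale of $O$, I would let $\Theta_i(O)$ record the pair consisting of $u_i$ together with a level-$i$ structural invariant of $O$ taken up to the action of $Sym(\alphabet^{u_i})$: roughly, a suitably robust version of the partition $Parts(O,u_i)$ together with the $u_i$-skeletons of its members, viewed as a point of a standard Borel space of decorated finite subsets of $K(\shiftspace)$ regarded up to the $\mathtt{D}_{u_i}$-action $\phi\cdot K\mapsto\widehat{\phi}[K]$. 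Using the Borel construction of $\tau$ and the Borel selectors from Section 2, together with the Borelness of the actions $\mathtt{D}_{u_i}$, this assignment is Borel.

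The easy half of the reduction is that equality of $\Theta_i(O)$ and $\Theta_i(O')$ for even a single large $i$ already forces $O\cong O'$: equality of the $u_i$-tags gives a common scale $\mathbf{r}$, and equality of the level-$i$ invariants produces $\phi\in Sym(\alphabet^{u_i})$ and pieces $W\in Parts(O,u_i)$, $W'\in Parts(O',u_i)$ with $\widehat{\phi}[W]=W'$; choosing $\alpha\in W$ and rewriting $W=\overline{A(\alpha,u_i,0)}$ after a shift, one gets $\widehat{\phi}[W]=\overline{A(\widehat{\phi}(\alpha),u_i,0)}$, so with $\beta:=\widehat{\phi}(\alpha)\in O'$ the permutation $\phi$ carries the $u_i$-blocks of $\alpha$ onto those of $\beta$, and Lemma~\ref{lemma-notobscurelemma} yields $O\cong O'$. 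Together with the converse, this shows that $O\cong O'$ if and only if $\Theta_i(O)=\Theta_i(O')$ for all large $i$, which is exactly $E_1$-equivalence of the associated sequences; hence a Borel reduction to $E_1$.

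The hard half is that $O\cong O'$ implies $\Theta_i(O)=\Theta_i(O')$ for all sufficiently large $i$, and this is the only place the growing-blocks hypothesis enters. Let $\pi:O\to O'$ be a conjugacy, $N=|\pi|$, and $C$ a block code of length at most $N$ inducing $\pi$. First, the identification of the maximal equicontinuous factor of a Toeplitz subshift with its associated odometer, together with Theorem~\ref{theorem-conjugacyodometer}, forces $O$ and $O'$ to have the same scale, so the $u_i$-tags agree for every $i$. For the level-$i$ invariants, the point is that $C$ acts \emph{locally} relative to the scale $u_i$: on a piece $W\in Parts(O,u_i)$ whose $u_i$-skeleton is non-blank throughout $[-N,N]$ --- hence, by $u_i$-periodicity, throughout a radius-$N$ neighbourhood of every block boundary --- the block $\pi(\gamma)[ku_i,(k+1)u_i)$ of any $\gamma\in W$ depends only on the block $\gamma[ku_i,(k+1)u_i)$, its boundary symbols being read off from the skeleton common to all members of $W$; thus $\pi$ restricts on $W$ to $\widehat{\phi_W}$ for some $\phi_W\in Sym(\alphabet^{u_i})$ and $\widehat{\phi_W}[W]=\pi[W]\in Parts(O',u_i)$. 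Once $i$ is large the growing-blocks hypothesis makes the minimal filled $u_i$-block have length greater than $2N$, so such ``deep'' pieces exist, and Proposition~\ref{proposition-phole} applied to $\pi$ and to $\pi^{-1}$ shows that $\pi$ moves $u_i$-holes by at most $N$, so $\pi$ matches the deep pieces of $O$ with those of $O'$ and matches the $u_i$-skeletons up to a radius-$N$ error. The main obstacle is to upgrade this approximate, margin-$N$ agreement into an exact equality of the level-$i$ invariants modulo $Sym(\alphabet^{u_i})$ --- in particular, to control the pieces lying near the $u_i$-holes --- uniformly as $i\to\infty$; this is carried out by the bookkeeping underlying Lemmas~\ref{lemma-obscurelemma}--\ref{lemma-notobscurelemma} of Downarowicz--Kwiatkowski--Lacroix, using that a sufficiently long filled block determines the surrounding block contents up to the permutations $\phi_W$. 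It is precisely here that ``growing blocks'' cannot be dispensed with, since for general Toeplitz subshifts the margin-$N$ discrepancy need not wash out as $i$ grows, and indeed designing $\Theta_i$ so that both halves of the argument go through is the true heart of the proof.
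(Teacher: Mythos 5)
Your overall architecture is the same as the paper's: assign to $O$ the sequence of level-$i$ data $(u_i,\,\text{a finite set of pieces of }O\text{ taken up to the action }\mathtt{D}_{u_i}\text{ of }Sym(\alphabet^{u_i}))$, observe that each level is smooth so that eventual agreement reduces to $E_1$, get the backward implication from Lemma \ref{lemma-notobscurelemma}, and get the forward implication from Proposition \ref{proposition-phole}, Lemma \ref{lemma-obscurelemma} and the growing-blocks hypothesis. The backward half of your sketch is essentially the paper's. But the forward half has a genuine gap, and you name it yourself: you never actually define $\Theta_i$, and you defer precisely the step (``designing $\Theta_i$ so that both halves of the argument go through'') that is the heart of the proof. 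The candidate you do describe --- the full partition $Parts(O,u_i)$ decorated with skeletons, modulo $\mathtt{D}_{u_i}$ --- does not work, and the ``margin-$N$ error'' you acknowledge cannot be bookkept away for that invariant: a conjugacy $\pi$ maps elements of $Parts(O,u_i)$ onto elements of $Parts(O',u_i)$, but the skeletons match only after a shift of size up to $|\pi|$, and a shift by an amount not divisible by $u_i$ is not implemented by the grid-respecting homeomorphisms $\widehat{\phi}$; the pieces whose skeletons are blank within distance $|\pi|$ of $0$ need not be $\mathtt{D}_{u_i}$-matched at all, no matter how large $i$ is. So with this $\Theta_i$ the statement ``$O\cong O'$ implies eventual equality'' is simply not established (and is not expected to hold).

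What is missing is a canonical anchoring that kills the bounded shift. The paper's choice is $\chi(O)_i=\{\seqshift^{\lfloor length(W)/2\rfloor}[W]: W\in Parts_*(O,u_i)\}$, i.e.\ only those pieces whose skeleton has a filled block starting at $0$, re-centered at the midpoint of that block, compared via $\mathtt{D}_{u_i}^{\fin}$ (equality of the sets of $\mathtt{D}_{u_i}$-classes). Then Proposition \ref{proposition-phole}, applied to $\pi$ and $\pi^{-1}$, shows the two holes bounding the relevant filled block move by at most $|\pi|$, so the image midpoint $j$ satisfies $|j|\leq|\pi|+2$; hence $\seqshift^{j}\circ\pi$ is a conjugacy of length at most $2|\pi|+2$ carrying the anchored point of $W$ to the anchored point of $\seqshift^{j}[\pi[W]]$, and once growing blocks forces the minimal filled $u_i$-block length above $4|\pi|+6$, both anchored points are $u_i$-periodic on $[-2|\pi|-2,\,2|\pi|+2]$, so Lemma \ref{lemma-obscurelemma} produces $\phi\in Sym(\alphabet^{u_i})$ with $\widehat{\phi}[W]=\seqshift^{j}[\pi[W]]\in\chi(O')_i$. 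This yields exact, not approximate, agreement of the invariants for all large $i$. Your proposal would be complete only after supplying some such anchoring (this one, or an equivalent) and verifying the quantitative bookkeeping above; as written, the central construction is asserted rather than carried out.
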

\begin{proof} Let $\backsim_{\alphabet}$ be the equivalence relation on $(\natnump)^{\mathbb{N}} \times (\Fin(K(\shiftspace)))^{\mathbb{N}}$ defined by
\[(r,(F_i)_{i \in \mathbb{N}}) \backsim_{\alphabet} (s,(F'_i)_{i \in \mathbb{N}}) \Longleftrightarrow r=s\ \wedge\ \exists j\ \forall i \geq j\ (F_i,F'_i) \in \mathtt{D}_{r_i}^{\fin}\]

It is easily checked that each $\mathtt{D}_{p}^{\fin}$ is a finite Borel equivalence relation and hence is smooth. It follows that $\backsim_{\alphabet}$ is Borel reducible to $E_1$. Thus it is sufficient to prove that the topological conjugacy relation on $\spaceoftoeplitzsubshiftgr$ is Borel reducible to $\backsim_{\alphabet}$.
Let $f: \spaceoftoeplitzsubshiftgr \rightarrow (\natnump)^{\mathbb{N}} \times (\Fin(K(\shiftspace)))^{\mathbb{N}}$ be the map given by
\[f(O)=(\tau(O), \chi(O))\]
where $\tau(O)$ is the natural factorization of the scale of $O$ defined in \S 2.6 and
\[\chi(O)_i=\{\seqshift^{\lfloor j/2 \rfloor}[W]: W \in Parts_*(O,\tau(O)_i)\ \wedge\ length(W)=j\}\]
for all $i \in \mathbb{N}$. In other words, $\chi(O)_i$ is the subset of $Parts(O,\tau(O)_i)$ consisting of those elements which position the midpoints of the filled $\tau(O)_i$-blocks in the $\tau(O)_i$-skeleton of $O$ at index $0$. (If such a block has even length, then its ``midpoint" is defined to be the index which cuts the block in such a way that there is one more non-blank symbol on its left than on its right.)

We claim that $f$ is a Borel reduction from the topological conjugacy relation on $\spaceoftoeplitzsubshiftgr$ to the equivalence relation $\backsim_{\alphabet}$. It is straightforward to check that $f$ is Borel and we will skip the tedious details.

To see that $f$ is a reduction, pick $O, O' \in \spaceoftoeplitzsubshiftgr$ such that $O$ and $O'$ are topologically conjugate and let $\pi: O \rightarrow O'$ be a topological conjugacy. Recall that topologically conjugate Toeplitz subshifts have the same scale and hence $\tau(O)=\tau(O')$. Let $(r_i)_{i \in \mathbb{N}}$ be the sequence $\tau(O)$. Since $O$ and $O'$ both have growing blocks with respect to $(r_i)_{i \in \mathbb{N}}$, there exists $n_0$ such that the minimal lengths of the filled $r_i$-blocks of $O$ and $O'$ are both greater than $4|\pi|+6$ for all $i \geq n_0$. We claim that
\[(\chi(O)_i,\chi(O')_i) \in \mathtt{D}_{r_i}^{\fin}\]
for all $i \geq n_0$, which implies that $f(O) \backsim_{\alphabet} f(O')$. Let $i \in \mathbb{N}$ be such that $i \geq n_0$. We want to show that
\[ \{[W]_{\mathtt{D}_{r_i}}: W \in \chi(O)_i\}=\{[W]_{\mathtt{D}_{r_i}}: W \in \chi(O')_i\}\]
Pick $W \in \chi(O)_i$. By the definition of $\chi(O)$, $W$ is of the form $\seqshift^{\lfloor n/2 \rfloor}[Z]$ for some set $Z \in Parts_*(O,r_i)$ with $length(Z)=n$. Choose $\alpha \in W$ and set $\beta=\pi(\alpha)$. Let $k=\lfloor n/2 \rfloor -|\pi|-2$ and $k'=\lfloor n/2 \rfloor + |\pi| + 2$.

By the choice of $i$, we have that $n \geq 4|\pi|+6$ and hence $k \geq |\pi|+1$. Since
\[ [-k-|\pi|-1,k+|\pi|+1] \subseteq Per_{r_i}(\alpha)\]
it follows from Proposition \ref{proposition-phole} that $[-k,k] \subseteq Per_{r_i}(\beta)$ and hence the subblock $\beta[-k,k]$ is a part of some filled $r_i$-block of $Skel(\beta,r_i)$. Similarly, it follows from Proposition \ref{proposition-phole} that there are at least two $r_i$-holes in $Skel(\beta,r_i)$ along the interval $[-k',k']$ since $Skel(\alpha,r_i)$ has two $r_i$-holes at the indices $-1 - \lfloor n/2 \rfloor$ and $n - \lfloor n/2 \rfloor$. Let $q' < 0 < q$ be the $r_i$-holes in the skeleton $Skel(\beta,r_i)$ such that \[Skel(\beta,r_i)(q'')\neq \blanksymbol\]
for all $q' < q'' < q$. Clearly we have that $-k' \leq q' < -k < k < q \leq k'$. Set
\[j= \lceil (q+q')/2 \rceil\]
Notice that the filled $r_i$-block to which $\beta[-k,k]$ belongs is $\beta[q'+1,q-1]$ and the midpoint of this filled $r_i$-block is $j$. Hence $\seqshift^{j}[\pi[W]] \in \chi(O')_i$.

By the choice of $i$, we know that the minimal lengths of the filled-$r_i$-blocks of $\alpha$ and $\seqshift^j(\beta)$ are both greater than $4|\pi|+6$. Since $W$ and $\seqshift^j[\pi[W]]$ both position the midpoints of the corresponding filled $r_i$-blocks at $0$, we have that
\[ [-2|\pi|-2,2|\pi|+2] \subseteq Per_{r_i}(\alpha), Per_{r_i}(\seqshift^j(\beta)) \]
On the other hand, it follows from the previous inequalities that
\[ j= \lceil (q+q')/2 \rceil \leq \lceil (k'-k)/2 \rceil \leq |\pi|+2 \]
and hence the topological conjugacy $\seqshift^j \circ \pi$ and its inverse can be given by some block codes of length at most $2|\pi|+2$. Consequently, Lemma \ref{lemma-obscurelemma} implies that there exists $\phi \in Sym(\alphabet^{r_i})$ such that
\[ \phi(\alpha[l r_i,(l+1)r_i))=(\seqshift^j(\beta))[lr_i,(l+1)r_i)\]
for all $l \in \mathbb{Z}$. Then it easily follows from the proof of Lemma \ref{lemma-notobscurelemma} that the induced homeomorphism $\widehat{\phi}$ bijectively maps $W$ onto $\seqshift^j[\pi[W]]$. Therefore, $W$ and $\seqshift^j[\pi[W]]$ are $\mathtt{D}_{r_i}$-equivalent which shows that
\[ \{[W]_{\mathtt{D}_{r_i}}: W \in \chi(O)_i\} \subseteq \{[W]_{\mathtt{D}_{r_i}}: W \in \chi(O')_i\}\]
Carrying out this argument symmetrically, we easily obtain $(\chi(O)_i,\chi(O')_i) \in \mathtt{D}_{r_i}^{\fin}$. Hence, $f(O) \backsim_{\alphabet} f(O')$ whenever $O$ and $O'$ are topologically conjugate.

Now pick $O, O' \in \spaceoftoeplitzsubshiftgr$ and assume that $f(O) \backsim_{\alphabet} f(O')$. Then $\tau(O)=\tau(O')$; and for some sufficiently large $i$ and some $W \in Parts(O,\tau(O)_i)$ is bijectively mapped onto some $Z \in Parts(O',\tau(O)_i)$ via a homeomorphism $\widehat{\phi}$ induced by a permutation $\phi \in Sym(\alphabet^{\tau(O)_i})$ . In this case, $\widehat{\phi}$ can be extended to a topological conjugacy between $O$ and $O'$ as in the proof of Lemma \ref{lemma-notobscurelemma}. Therefore, $O$ and $O'$ are topologically conjugate.
\end{proof}
\begin{proof}[Proof of Theorem \ref{theorem-mainresulttoeplitz}] It follows from Theorem \ref{theorem-e1reduction} that the topological conjugacy relation on $\spaceoftoeplitzsubshiftgr$ is hypersmooth and hence is hyperfinite by Corollary \ref{corollary-maincorollary}. Consequently, its restrictions onto Borel subsets of $\spaceoftoeplitzsubshiftgr$ are hyperfinite. In particular, the topological conjugacy relation on $\spaceoftoeplitzsubshiftsep$ is hyperfinite.
\end{proof}

\section{Concluding Remarks}
It is easily seen from the proofs of Lemma \ref{lemma-obscurelemma} and Lemma \ref{lemma-notobscurelemma} that if $O$ and $O'$ are topologically conjugate Toeplitz subshifts, then some elements of the partition $Parts(O,p)$ are mapped onto some elements of the partition $Parts(O',p)$ under the natural action of $Sym(n^p)$ on $K(\shiftspace)$ for a sufficiently large factor $p$ of the common scale. The proof of Theorem \ref{theorem-e1reduction} relies on the fact that we can eventually identify the ``correct" subset of each partition $Parts(O,p)$ in a Borel way for Toeplitz subshifts with growing blocks. It is natural to attempt to find such a Borel choice for arbitrary Toeplitz subshifts. However, it is not clear to us how to identify the ``correct" subset of each $Parts(O,p)$ in a Borel way without imposing any conditions on the $p$-skeleton structures.

We should also note that it essentially follows from Lemma \ref{lemma-obscurelemma} and Lemma \ref{lemma-notobscurelemma} that topological conjugacy of pointed Toeplitz subshifts \textit{with Toeplitz points} is hyperfinite. Consequently, one might expect to prove the hyperfiniteness of the topological conjugacy relation on Toeplitz subshifts by reducing topological conjugacy to pointed topological conjugacy.
\begin{question} Does there exist a Borel map $f: \spaceoftoeplitzsubshift \rightarrow \shiftspace$ such that for all $O,O' \in \spaceoftoeplitzsubshift$,
\begin{itemize}
\item[a.] $f(O) \in O$,
\item[b.] $f(O)$ is a Toeplitz sequence, and
\item[c.] $(O,f(O))$ is pointed topologically conjugate to $(O',f(O'))$ whenever $O$ and $O'$ are topologically conjugate?
\end{itemize}
\end{question}
We suspect that there does not exist such a Borel map. However, we should note that such a Borel map exist may exist for various classes of Toeplitz subshifts. For example, consider the class of Toeplitz subshifts with single holes whose scales do not contain an even factor. For each such Toeplitz subshift $O$, we can construct the sequence $(W_i)_{i \in \mathbb{N}}$ of closed sets, where $W_i \in Parts(O,r_i)$, $r_i$ is a factorization of the scale of $O$ with respect to which $O$ has single holes, and the $r_i$-skeleton of $W_i$ positions the single hole in the interval $[0,r_i)$ at index $(r_i-1)/2$. Then an argument similar to the proof of Theorem \ref{theorem-e1reduction} shows that the Borel map $O \mapsto \bigcap_{i \in \mathbb{N}} W_i$ satisfies the requirements.

\textbf{Acknowledgements} This work is a part of the author's PhD thesis under the supervision of Simon Thomas. The author is grateful to Simon Thomas for his invaluable guidance and many fruitful discussions. This research was partially supported by Simon Thomas through the NSF grant DMS-1101597.
\bibliography{referencetoeplitz}{}

\providecommand{\bysame}{\leavevmode\hbox to3em{\hrulefill}\thinspace}
\providecommand{\MR}{\relax\ifhmode\unskip\space\fi MR }
\providecommand{\MRhref}[2]{%
  \href{http://www.ams.org/mathscinet-getitem?mr=#1}{#2}
}
\providecommand{\href}[2]{#2}
\begin{thebibliography}{DKL95}

\bibitem[Ada02]{Adams02}
Scot Adams, \emph{Containment does not imply {B}orel reducibility}, Set theory
  ({P}iscataway, {NJ}, 1999), DIMACS Ser. Discrete Math. Theoret. Comput. Sci.,
  vol.~58, Amer. Math. Soc., Providence, RI, 2002, pp.~1--23.

\bibitem[BS95]{BuescuStewart95}
Jorge Buescu and Ian Stewart, \emph{Liapunov stability and adding machines},
  Ergodic Theory Dynam. Systems \textbf{15} (1995), no.~2, 271--290.

\bibitem[Cle09]{Clemens09}
John~D. Clemens, \emph{Isomorphism of subshifts is a universal countable
  {B}orel equivalence relation}, Israel J. Math. \textbf{170} (2009), 113--123.

\bibitem[DJK94]{DoughertyJacksonKechris94}
R.~Dougherty, S.~Jackson, and A.~S. Kechris, \emph{The structure of hyperfinite
  {B}orel equivalence relations}, Trans. Amer. Math. Soc. \textbf{341} (1994),
  no.~1, 193--225.

\bibitem[DKL95]{DownarowiczKwiatkowskiLacroix95}
T.~Downarowicz, J.~Kwiatkowski, and Y.~Lacroix, \emph{A criterion for
  {T}oeplitz flows to be topologically isomorphic and applications}, Colloq.
  Math. \textbf{68} (1995), no.~2, 219--228.

\bibitem[Dow05]{Downarowicz05}
Tomasz Downarowicz, \emph{Survey of odometers and {T}oeplitz flows}, Algebraic
  and topological dynamics, Contemp. Math., vol. 385, Amer. Math. Soc.,
  Providence, RI, 2005, pp.~7--37.

\bibitem[FS89]{FriedmanStanley89}
Harvey Friedman and Lee Stanley, \emph{A {B}orel reducibility theory for
  classes of countable structures}, J. Symbolic Logic \textbf{54} (1989),
  no.~3, 894--914.

\bibitem[Gao09]{Gao09}
Su~Gao, \emph{Invariant descriptive set theory}, Pure and Applied Mathematics
  (Boca Raton), vol. 293, CRC Press, Boca Raton, FL, 2009.

\bibitem[GH]{GaoHill13}
Su~Gao and Aaron Hill, \emph{Topological isomorphism for rank-1 systems},
  Journal d′Analyse Mathematique, to appear.

\bibitem[GJS15]{GaoJacksonSeward15}
Su~Gao, Steve Jackson, and Brandon Seward, \emph{Group colorings and
  {B}ernoulli subflows}, Mem. Amer. Math. Soc. \textbf{241} (2015), no.~1141,
  236.

\bibitem[HKL90]{HarringtonKechrisLouveau90}
L.~A. Harrington, A.~S. Kechris, and A.~Louveau, \emph{A {G}limm-{E}ffros
  dichotomy for {B}orel equivalence relations}, J. Amer. Math. Soc. \textbf{3}
  (1990), no.~4, 903--928.

\bibitem[JKL02]{JacksonKechrisLouveau02}
S.~Jackson, A.~S. Kechris, and A.~Louveau, \emph{Countable {B}orel equivalence
  relations}, J. Math. Log. \textbf{2} (2002), no.~1, 1--80.

\bibitem[Kan08]{Kanovei08}
Vladimir Kanovei, \emph{Borel equivalence relations}, University Lecture
  Series, vol.~44, American Mathematical Society, Providence, RI, 2008,
  Structure and classification.

\bibitem[Kec95]{Kechris95}
Alexander~S. Kechris, \emph{Classical descriptive set theory}, Graduate Texts
  in Mathematics, vol. 156, Springer-Verlag, New York, 1995.

\bibitem[KL97]{KechrisLouveau97}
Alexander~S. Kechris and Alain Louveau, \emph{The classification of hypersmooth
  {B}orel equivalence relations}, J. Amer. Math. Soc. \textbf{10} (1997),
  no.~1, 215--242.

\bibitem[KM04]{KechrisMiller04}
Alexander~S. Kechris and Benjamin~D. Miller, \emph{Topics in orbit
  equivalence}, Lecture Notes in Mathematics, vol. 1852, Springer-Verlag,
  Berlin, 2004.

\bibitem[K\r03]{Kurka03}
P.~K\r{u}rka, \emph{Topological and symbolic dynamics}, Cours
  sp\'{e}cialis\'{e}s, vol.~11, Soci\'{e}t\'{e} Math\'{e}matique de France,
  2003.

\bibitem[LM95]{LindMarcus95}
Douglas Lind and Brian Marcus, \emph{An introduction to symbolic dynamics and
  coding}, Cambridge University Press, Cambridge, 1995.

\bibitem[Pau76]{Paul76}
Michael~E. Paul, \emph{Construction of almost automorphic symbolic minimal
  flows}, General Topology and Appl. \textbf{6} (1976), no.~1, 45--56.

\bibitem[Sil80]{Silver80}
Jack~H. Silver, \emph{Counting the number of equivalence classes of {B}orel and
  coanalytic equivalence relations}, Ann. Math. Logic \textbf{18} (1980),
  no.~1, 1--28.

\bibitem[ST15]{SabokTsankov15}
Marcin Sabok and Todor Tsankov, \emph{{On the complexity of topological
  conjugacy of Toeplitz subshifts}}, arXiv preprint arXiv:1506.07671v1
  [math.LO] (2015).

\bibitem[Tho13]{Thomas13}
Simon Thomas, \emph{Topological full groups of minimal subshifts and
  just-infinite groups}, Proceedings of the 12th {A}sian {L}ogic {C}onference,
  World Sci. Publ., Hackensack, NJ, 2013, pp.~298--313.

\bibitem[Wil84]{Williams84}
Susan Williams, \emph{Toeplitz minimal flows which are not uniquely ergodic},
  Z. Wahrsch. Verw. Gebiete \textbf{67} (1984), no.~1, 95--107.

\end{thebibliography}
\bibliographystyle{amsalpha}
\end{document}